\newcommand{\N}{I\!\!N}
\newcommand{\dx}{\,\mathrm{d}x}
\newcommand{\dt}{\mathrm{d}t}
\def\A{{\mathcal A}}
 \def\al{{\alpha}} 
\newcommand{\q}{\quad} 
\newcommand{\qq}{\qquad}
\newtheorem{Theorem}{Theorem}[section]
\newtheorem{Proposition}{Proposition}[section]
\newtheorem{Remark}{Remark}[section]
\newtheorem{Definition}{Definition}[section]
\newtheorem{Assumption}{Assumption}[section]
\newenvironment{proof}[1]{%
\par\vspace{1\baselineskip}%
\noindent{\bf Proof#1.\  }\ignorespaces }{%
\nobreak\hfill\mbox{\ \ $\Box$}%
\par\vspace{1\baselineskip}}%
\newcommand{\bd}{\begin{displaymath}}
\newcommand{\ed}{\end{displaymath}}
\newcommand{\be}{\begin{equation}}
\newcommand{\ee}{\end{equation}}
\newcommand{\bea}{\begin{eqnarray}}
\newcommand{\eea}{\end{eqnarray}}
\newcommand{\bda}{\begin{eqnarray*}}
\newcommand{\eda}{\end{eqnarray*}}
\newcommand{\ba}{\begin{array}}
\newcommand{\ea}{\end{array}}
\newcommand{\B}{I\kern -.35em B}
\newcommand{\R}{I\kern -.35em R}
\newcommand{\Mt}{\Rightarrow}
\newcommand{\To}{\rightrightarrows}
\newcommand{\mt}{\mapsto}
\newcommand{\st}{\subset}
\newcommand{\Lra}{\Longrightarrow}
\newcommand{\U}{{\mathcal U}}
\newcommand{\e}{\varepsilon}
\newcommand{\ph}{\varphi}
\newcommand{\sth}{ \, :\;}
\newcommand{\dd}{\mbox{\rm\,d}}
\renewcommand{\lll}{\langle}
\newcommand{\rrr}{\rangle}
\newcommand{\X}{{\mathcal X}}
\newcommand{\Y}{{\mathcal Y}}
\newcommand{\F}{{\mathcal F}}
\renewcommand{\O}{{\mathcal O}}
\def\R{\mathbb{R}}
\newcommand{\reff}{\eqref}
\newcommand{\bino}{\bigskip\noindent}
\newcommand{\Dex}{{\Delta x}}
\definecolor{dgreen}{rgb}{0,0.8,0}
\begin{document}

\title{Strong metric (sub)regularity in optimal control%
\thanks{This study is financed by the European Union-NextGenerationEU, through the
National Recovery and Resilience Plan of the Republic of Bulgaria, project
№ BG-RRP-2.004-0008-C01. The last author is also supported by the Austrian Science Foundation (FWF) under grant 
            No I-4571-N. }}
\author{
N.A. Jork\thanks{University Tübingen, Germany, {\tt nicolai.jork@uni-tuebingen.de}} \and
N.P. Osmolovskii\thanks{Systems Research Institute, Polish Academy of Sciences,  
Warsaw, Poland  {\tt osmolov@ibspan.waw.pl}} 
 \and  V.M. Veliov\thanks{Institute of Statistics and Mathematical Methods in Economics,
TU Wien, Austria, {\tt vladimir.veliov@tuwien.ac.at}} }

\date{}

\maketitle

\vspace{-1cm}

\begin{center} 
{\em Dedicated to the honor of Terry Rockafellar}
\end{center}

\begin{abstract}

This is mainly a survey on the properties of Strong Metric Regularity (SMR)
and Strong Metric subRegularity (SMsR) 
of mappings representing first order optimality conditions (so-called {\em optimality mappings})
of optimization problems in infinite dimensional spaces. The focus is on the optimality mappings
associated with optimal control problems for ODE systems or PDEs. We especially emphasize 
an extension of the concepts of SMR and SMsR which involves two metrics either in the domain
or in the image spaces. The paper shows the relevance of this extension in optimal
control.   
\end{abstract}

{\bf Keywords}: variational analysis, metric regularity, optimization, optimal control 

{\bf AMS Classification}:  {49K40, 90C31, 49M05}

\section{Introduction} \label{SIntro}

In this paper, we present a survey about recent results on two properties of
strong metric regularity of mappings, focusing on the so-called optimality mappings 
in calculus of variations and optimal control. These regularity properties imply useful stability 
properties of the solutions of such problems and estimates for the convergence rate of various kinds of 
numerical approximations.   

We begin with two definitions.
Let $(\X,d_\X)$ and $(\Y,d_\Y)$ be metric spaces. Let $d^\circ_{\X}$ and $d^\circ_{\Y}$
be additional metrics in the sets $\X$ and $\Y$, respectively. The symbols 
$\O_\X(x)$ and $\O^\circ_\X(x)$ denote open neighborhoods of $x \in \X$ with respect to 
$d_\X$ and $d_\X^\circ$, correspondingly. 
The meaning of $\O_\Y(y)$ and $\O^\circ_\Y(y)$ is similar for $y \in \Y$. Given a set-valued mapping 
$\F:\X\To\Y$, the inverse mapping, $\F^{-1}: \Y \To \X$, is the set-valued mapping 
defined as $\F^{-1}(y):=\{ x \in \X: y \in \F(x)\}$.

\begin{Definition} \label{DSMHsR}
The set-valued mapping $\F:\X \To \Y$ is {\em Strongly Metrically H\"older sub-Regular} 
(SMHsR) at $(\hat x, \hat y) \in \X \times \Y$ 
(with metrics $d_\X, d_\X^\circ$ in $\X$ and $d_\Y, d_\Y^\circ$ in $\Y$) if 
$\hat y \in \F(\hat x)$ and there exist numbers $\kappa, \, \beta > 0$ and
neighborhoods $\O_\X(\hat x)$ and $\O_\Y(\hat y)$ such that 
\be \label{ESMHsR}
         \forall (x,y) \in \O_\X(\hat x) \times \O_\Y(\hat y) \; \mbox{ with } \; y \in \F(x)
         \;\;\;\Lra  \;\;\; d^\circ_\X(x,\hat x) \leq  \kappa \,d^\circ_\Y(y, \hat y)^\beta.
\ee
\end{Definition}

\bino
\begin{Definition} \label{DSMHR}
The set-valued mapping $\F:\X \To \Y$ is {\em Strongly Metrically H\"older Regular} 
(SMHR) around $(\hat x, \hat y) \in \X \times \Y$ 
(with metrics $d_\X, d_\X^\circ$ in $\X$ and $d_\Y, d_\Y^\circ$ in $\Y$)
if $\hat y \in \F(\hat x)$ and
there exist numbers $\kappa, \, \beta > 0$ and
neighborhoods $\O_\X(\hat x)$ and $\O_\Y(\hat y)$ such that 
the mapping $\F^{-1}(\cdot) \cap \O_\X(\hat x)$ is non-empty valued on $\O_\Y(\hat y)$ and 
\be \label{ESMHR}
         \forall (x,y), (x',y') \in \O_\X(\hat x) \times \O_\Y(\hat y) \; 
         \mbox{ with } \; y \in \F(x), \;\; y' \in \F(x') 
         \;\;\;\Lra  \;\;\; d^\circ_\X(x,x') \leq  \kappa \,d^\circ_\Y(y, y')^\beta.
\ee
%
\end{Definition}

Obviously (by taking $y = \hat y$ in the definition), SMHsR implies that 
$\F^{-1}(\hat y)$ is a singleton, while $\F^{-1}(y)$ may be empty for $y \not= \hat y$. 
Similarly, if $F$ is SMHR then $\F^{-1}(\cdot) \cap \O_\X(\hat x)$
is a single-valued H\"older continuous function in $\O_\Y(\hat y)$ 
(Lipschitz continuous if $\beta = 1$).

If some of the regularity properties defined above are fulfilled with $\beta = 1$ 
we skip the name  ``H\"older'' and shorten the acronyms as SMsR and SMR. 

The elements $y \in \Y$ can be considered as ``disturbances'' of the reference point $\hat y$, 
so that it is all about stability of the solution $\hat x$ of $\hat y \in \F(x)$ with respect to (``small'') 
disturbances $y$.
In case of linear spaces $\X$ and $\Y$, consideration of two metrics in the domain space $\X$
is often aimed to ensure differentiability of the functions involved in $\F$ in the stronger norm,
presumably $d_\X \geq d^\circ_\X$, while the stronger norm in the image space $\Y$, 
presumably $d_Y$, ensures appropriate properties of the disturbances $y \in \Y$ for which
\reff{ESMHsR}, resp. \reff{ESMHR}, is fulfilled.  


\bino
Below we make some remarks concerning the history of the strong metric regularity properties
defined above (we do not discuss in this paper the non-strong versions of these
properties\footnote{See e.g. \cite[Chapter 3]{AD+TR-book2} for the notions of Metric Regularity and 
Metric Subregularity. For important classes of mappings between finite dimensional
spaces this notions actually coincide: KKT mappings \cite[Theorem 4I.2]{AD+TR-book2}, 
maximal monotone mappings \cite{Rock-23}, variational inequalities over convex polyhedral 
sets \cite{Dont+Rock-96}.}), starting with the Lipschitz case $\beta =1$, and 
with a single metric in each space: $d_\X = d_\X^\circ$ and $d_\Y = d_\Y^\circ$.
The name {\em strong regularity} was introduced by Robinson \cite{Rob-80}, while
the name {\em metric regularity} was coined by Borwein \cite{Borwein-86}.  
The property of SMsR was introduced by Dontchev \cite{Dont-95} and named in this way
in \cite{AD+TR-04}; see also \cite[Chapter~3.9]{AD+TR-book2} 
and the recent paper \cite{Cibulka+Dontchev+Kruger-18}.
However, versions of these properties have been used under several other names
by Bonnans, Shapiro, Klatte, Kummer and others, 
\cite{Bonnans-94,Bonn+Shap-book-2000,Klatte+Kummer-2002-book}. 

The H\"older regularity notions ($\beta < 1$), as defined above, 
were studied by Frankowska and Quincampoix \cite{Fank+Quin-12}, although the idea 
to study nonlinear estimations such as \reff{ESMHsR} or \reff{ESMHR} 
go back to works by Borwein and Zhuang (1988), Penot (1989), 
Frankowska (1987, 1990).
The general topic of {\em nonlinear regularity} is reviewed in detail by Ioffe \cite{Ioffe-17}.

Two norms in the image space $\Y$ were first used by Quincampoix and Veliov in
\cite{MQ+VV:SICON-13}, where the property of {\em strong bi-metric regularity} 
was introduced. Utilization of two norms also in the domain space $\X$ 
proved to be useful; further quotations will be given in the corresponding sections. 
We mention that the two regularity properties with four norms defined above can be considered 
as a special case of the {\em regularity on a fixed set}, which is a non-local property;
see Ioffe \cite{Ioffe:11_fixed_set}. However, the analysis in the present paper is
local and we deal with sets that are not necessarily fixed -- these are the neighborhoods 
$\O_\X(\hat x)$ and $\O_\Y(\hat y)$ in Definitions \ref{DSMHsR} and \ref{DSMHR}, 
which may be taken sufficiently small.  

The present survey is far from being comprehensive: a lot is missing concerning the general
studies of regularity of mappings or, specifically, of mappings associated with variational inequalities.
The paper does not include the numerous results about stability of solutions of optimization 
problems with respect to perturbations, for which the regularity theory of mappings is 
an important tool. Only a little is mentioned about applications of ideas and results
in the regularity theory to numerical approximations. We refer to the books by
Bonnans and Shapiro \cite{Bonn+Shap-book-2000}, 
Dontchev and Rockafellar \cite{AD+TR-book2},
Mordukhovich \cite{Morukh_book12-06},
Klatte and Kummer \cite{Klatte+Kummer-13},
Ioffe \cite{Ioffe-17}
for further information.
Instead, in this paper we focus on regularity properties of the optimality mapping associated
with optimal control problems, especially in cases when two norms 
in either the domain or in the image space naturally appear in metric regularity issues.
 
\bino
In the next section we present some basic abstract results about stability of the two regularity 
defined above.
In Section \ref{SMP} we present sufficient conditions for the SMsR property of a mapping 
defined by the Karush-Kuhn-Tucker system for mathematical programming problems 
in Banach spaces. 
Although this result concerns infinite-dimensional problems and
is applicable to problems of calculus of variations, it does not apply to problems of 
optimal control, where the control constraint cannot be represented by a finite 
number of inequalities. Results about SMsR and SMR of the optimality mapping 
associated with various optimal control problems are surveyed in the subsequent three 
sections: coercive Mayer-type optimal control problems for ODE systems, affine optimal control
problems for ODE systems, an optimal control problem for a semi-linear parabolic equations.
In all cases, we focus on results where consideration of two norms either in $\X$ or in $\Y$
is essential. In addition, we give a broader literature review of related results.

\section{Preliminaries} \label{SPrelim}

Below we present two theorems of Robinson's kind, claiming invariance of the metric regularity
properties under appropriately ``small'' perturbations, in particular under ``linearization''.  
The notions and notations introduced in the introduction will be used.
In addition we denote by $\B_\Y(y;\gamma)$ the ball in the space $(\Y,d_\Y)$
centered at $y$ and with radius $\gamma$. 

\begin{Theorem} \label{T_SMsR-Rob}
Assume that $\Y$ is a linear space and the metrics $d_\Y$ and $d^\circ_\Y$ are shift-invariant.
Assume also that $\F :\X \Mt \Y$ is SMsR (with metrics $d_\X, d_\X^\circ$ in $\X$ and 
$d_\Y, d_\Y^\circ$ in $\Y$) at $(\hat x, \hat y)$ with neighborhoods 
$\O_\X(\hat x)$, $\O_\Y(\hat y)$ and a constant $\kappa > 0$ (see Definition \ref{DSMHsR}
with $\beta = 1$).
Let $\O'_\X(\hat x)$ be a neighborhood of $\hat x$ and $\ph: \O'_\X(\hat x) \to \Y$. 
Let the numbers $\mu$, $\gamma$, and the neighborhood 
$\O'_\Y(\hat y)$ of $\hat y$ satisfy the relations
\bda
            && \O'_\X(\hat x) \st \O_\X(\hat x) , \qquad 
               \O'_\Y(\hat y) + \B_\Y(0;\gamma) \st \O_\Y(\hat y), 
               \qquad \mu \kappa < 1, \\
         && d_\Y(\ph(x),\ph(\hat x)) \leq \gamma, \quad 
     d^\circ_\Y(\ph(x),\ph(\hat x)) \leq \mu \,d^\circ_\X(x,\hat x)  \qquad \forall \, x \in \O_\X(\hat x).
\eda
Then the mapping $\F + \ph$ is SMsR at $(\hat x, \hat y + \ph(\hat x))$ with 
neighborhoods $\O'_\X(\hat x)$ and $\O'_\Y(\hat y) + \ph(\hat x)$ and a constant
$\kappa' := \kappa/(1-\mu \kappa)$.
\end{Theorem}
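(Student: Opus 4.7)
The plan is to reduce the SMsR property of the perturbed mapping $\F+\ph$ at $(\hat x,\hat y+\ph(\hat x))$ to the SMsR property of $\F$ at $(\hat x,\hat y)$ via the obvious substitution $y := z - \ph(x)$. Concretely, I fix an arbitrary pair $(x,z) \in \O'_\X(\hat x) \times (\O'_\Y(\hat y)+\ph(\hat x))$ such that $z \in (\F+\ph)(x)$, and set $y := z - \ph(x)$, so that $y \in \F(x)$. The goal is to verify that $(x,y)$ lies in the ``good'' neighborhoods $\O_\X(\hat x) \times \O_\Y(\hat y)$ of $\F$, and then to convert the ensuing SMsR estimate for $\F$ back to one for $\F+\ph$.

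First, the inclusion $x \in \O_\X(\hat x)$ follows immediately from $\O'_\X(\hat x) \subset \O_\X(\hat x)$. To place $y$ in $\O_\Y(\hat y)$ I write $z = y'+\ph(\hat x)$ with $y' \in \O'_\Y(\hat y)$; then $y = y' + (\ph(\hat x)-\ph(x))$, and by the shift-invariance of $d_\Y$ together with the bound $d_\Y(\ph(x),\ph(\hat x)) \le \gamma$, the point $\ph(\hat x)-\ph(x)$ lies in $\B_\Y(0;\gamma)$. Hence $y \in \O'_\Y(\hat y) + \B_\Y(0;\gamma) \subset \O_\Y(\hat y)$, as required. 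This is the step where the choice of $\gamma$ and the assumption on $\ph$ (in the metric $d_\Y$) are used in an essential way.

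Next I invoke the SMsR property of $\F$ to obtain
\[
d^\circ_\X(x,\hat x) \;\le\; \kappa\, d^\circ_\Y(y,\hat y).
\]
Now using the shift-invariance of $d^\circ_\Y$ together with the triangle inequality, I estimate
\[
d^\circ_\Y(y,\hat y) \;=\; d^\circ_\Y(z-\ph(x),\hat y)
\;\le\; d^\circ_\Y(\ph(\hat x),\ph(x)) + d^\circ_\Y(z,\hat y+\ph(\hat x)),
\]
and apply the Lipschitz-type bound $d^\circ_\Y(\ph(x),\ph(\hat x)) \le \mu\, d^\circ_\X(x,\hat x)$ to obtain
\[
d^\circ_\X(x,\hat x) \;\le\; \kappa\mu\, d^\circ_\X(x,\hat x) + \kappa\, d^\circ_\Y(z,\hat y+\ph(\hat x)).
\]
Since $\kappa\mu < 1$ by assumption, I can absorb the first term on the right and conclude
\[
d^\circ_\X(x,\hat x) \;\le\; \frac{\kappa}{1-\kappa\mu}\, d^\circ_\Y(z,\hat y+\ph(\hat x)) \;=\; \kappa'\, d^\circ_\Y(z,\hat y+\ph(\hat x)),
\]
which is exactly the SMsR inequality for $\F+\ph$ on the prescribed neighborhoods.

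The main potential pitfall, and the step I would be most careful with, is the bookkeeping between the two metrics on $\Y$: the coarse metric $d_\Y$ is used only once, to guarantee that the substituted point $y$ falls inside $\O_\Y(\hat y)$ (via the ball condition $\O'_\Y(\hat y)+\B_\Y(0;\gamma) \subset \O_\Y(\hat y)$), while the fine metric $d^\circ_\Y$ is used to propagate the quantitative estimate. Shift-invariance of both metrics is needed to identify the translated neighborhood $\O'_\Y(\hat y)+\ph(\hat x)$ with a neighborhood of $\hat y+\ph(\hat x)$ and to freely move $\ph(x)-\ph(\hat x)$ in and out of the metric.
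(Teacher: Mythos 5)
Your proof is correct and follows essentially the same route as the paper's: substitute $y := z-\ph(x)$, verify via the ball condition and the $d_\Y$-bound on $\ph$ that $y$ lands in $\O_\Y(\hat y)$, apply the SMsR estimate for $\F$, and absorb the $\kappa\mu\, d^\circ_\X(x,\hat x)$ term using $\kappa\mu<1$. The only cosmetic difference is that you make the shift-invariance steps (moving $\ph(\hat x)$ across the metric) fully explicit, which the paper leaves implicit.
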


\begin{proof}{} The proof adapts that of Theorem 2.1 in \cite{Cibulka+Dontchev+Kruger-18}.
Let $x \in \O'_\X(\hat x)$, $y \in \O'_\Y(\hat y) + \ph(\hat x)$ and
$y \in \F(x) + \ph(x)$. Then  $x \in \O_\X(\hat x)$ and
\bd
        \F(x) \ni y - \ph(x) \in \O'_\Y(\hat y) + \ph(\hat x) - \ph(x)  \st 
        \O'_\Y(\hat y) + \B_\Y(0; \gamma) \st  \O_\Y(\hat y).
\ed
Hence,
\bda
         d_\X^\circ(x,\hat x) &\leq& \kappa d_\Y^\circ(y-\ph(x),\hat y) 
         \leq \kappa [d^\circ_\Y(y - \ph(\hat x), \hat y) + d^\circ_\Y(\ph(\hat x), \ph(x))] \\
         &\leq& \kappa  d^\circ_\Y(y - \ph(\hat x), \hat y) + \mu \kappa  d_\X^\circ(x,\hat x),
\eda
which implies the SMsR property of $\F+\ph$ at $(\hat x, \hat y + \ph(\hat x))$
with the specified neighborhoods and constant $\kappa'$.
\end{proof}

Notice that in the case $\ph(\hat x) = 0$ the mapping $\F + \ph$ is SMsR at the same point
$(\hat x, \hat y)$ as $\F$ under the assumptions that the values of $\ph$ are sufficiently small
in $d_\Y$ and $\ph$ is calm at $\hat x$ with respect to $d^\circ_\X$ and $d^\circ_\Y$ 
with a sufficiently small constant $\mu$. 
This is exactly the case in the Banach space setting if an additive single-valued 
and Fr\'echet differentiable component of $\F$ is linearized and $\ph$ is the difference between
this component and its linearization.

\begin{Theorem} \label{T_SMR-Rob}
Assume that the metric space $(\X,d_\X)$ is complete, and $d_X = d^\circ_\X$. 
Also assume that $\Y$ is a linear space, 
the metrics $d_\Y$ and $d^\circ_\Y$ are shift-invariant, 
and $d^\circ_\Y \leq d_\Y$.
Let $\F :\X \Mt \Y$ be SMR (with metrics $d_\X, d_\X^\circ$ in $\X$ and 
$d_\Y, d_\Y^\circ$ in $\Y$) around $(\hat x, \hat y)$ with neighborhoods 
$\O_\X(\hat x) = \B_\X(\hat x;a)$, $\O_\Y(\hat y) = \B_\Y(\hat y;b)$ 
and a constant $\kappa > 0$ (see Definition \ref{DSMHsR} with $\beta = 1$).
In addition, let $\ph: B_\X(\hat x;a') \to \Y$, where $a' > 0$.
Finally, let the numbers $b' > 0$, $\mu$ and $\gamma$ satisfy the relations
\bda
            && 0 < a' \leq a, \qquad  b' + \gamma \leq b, \qquad \mu \kappa < 1, 
            \qquad \kappa(b' + \gamma) \leq (1 - \kappa \mu) a', \\
         && d_\Y(\ph(x),0) \leq \gamma, \quad 
     d^\circ_\Y(\ph(x),\ph(x')) \leq \mu \,d_\X(x,x')  \qquad \forall \, x, x' \in \B_\X(\hat x;a').
\eda
Then the mapping $\F + \ph$ is SMR at $(\hat x, \hat y)$ with 
neighborhoods $B_\X(\hat x;a')$ and $\B_\Y(\hat y;b')$, and a constant
$\kappa' := \kappa/(1-\mu \kappa)$.
\end{Theorem}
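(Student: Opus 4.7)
The plan is to split the SMR conclusion for $\F+\ph$ into two parts: the Lipschitz estimate on preimages (as in Theorem \ref{T_SMsR-Rob}), and the non-emptiness of $(\F+\ph)^{-1}(y) \cap \B_\X(\hat x;a')$ for every $y \in \B_\Y(\hat y;b')$. The first part is handled by essentially the same calculation as in the proof of Theorem \ref{T_SMsR-Rob}; the second part, which is absent in the subregular case, will be established by a Banach contraction argument on the closed ball $\overline{\B_\X(\hat x;a')}$.

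For the Lipschitz estimate, I would take $(x,y), (x',y') \in \B_\X(\hat x;a') \times \B_\Y(\hat y;b')$ with $y-\ph(x) \in \F(x)$ and $y'-\ph(x') \in \F(x')$. The bound $d_\Y(\ph(\cdot),0)\leq \gamma$ together with $b'+\gamma \leq b$ places $y-\ph(x)$ and $y'-\ph(x')$ in $\B_\Y(\hat y;b)$, so the SMR of $\F$ applies. Combined with shift-invariance of $d^\circ_\Y$ and the calmness of $\ph$ in $d^\circ_\Y$ with constant $\mu$, this yields
\[
d_\X(x,x') \leq \kappa\, d^\circ_\Y(y-\ph(x),y'-\ph(x')) \leq \kappa\, d^\circ_\Y(y,y') + \kappa\mu\, d_\X(x,x'),
\]
and, since $\kappa\mu<1$, rearrangement gives the estimate with constant $\kappa'=\kappa/(1-\kappa\mu)$.

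For existence, fix $y \in \B_\Y(\hat y;b')$ and define $T_y:\overline{\B_\X(\hat x;a')} \to \X$ by letting $T_y(u)$ be the unique point of $\F^{-1}(y-\ph(u)) \cap \B_\X(\hat x;a)$. Well-definedness follows because $y-\ph(u)\in \B_\Y(\hat y;b)$ (by the same estimate as above), the SMR of $\F$ supplies non-emptiness of the intersection, and its Lipschitz inequality (applied with coinciding second arguments and using $d_\X = d^\circ_\X$) supplies uniqueness. Comparing $T_y(u)$ with $\hat x \in \F^{-1}(\hat y)$ through SMR of $\F$ and using $d^\circ_\Y \leq d_\Y$ shows
\[
d_\X(T_y(u),\hat x) \leq \kappa\, d^\circ_\Y(y-\ph(u),\hat y) \leq \kappa(b'+\gamma) \leq (1-\kappa\mu)a',
\]
so $T_y$ maps $\overline{\B_\X(\hat x;a')}$ into itself. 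Applying SMR of $\F$ once more to $(T_y(u),y-\ph(u))$ and $(T_y(u'),y-\ph(u'))$ and invoking the calmness of $\ph$ shows that $T_y$ is a $\kappa\mu$-contraction.

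Completeness of $(\X,d_\X)$ makes $\overline{\B_\X(\hat x;a')}$ complete, so Banach's fixed-point theorem supplies a unique $\bar x$ with $T_y(\bar x)=\bar x$, which is exactly $y \in \F(\bar x)+\ph(\bar x)$. Together with the Lipschitz estimate this is the SMR property of $\F+\ph$ at $(\hat x,\hat y)$ with neighborhoods $\B_\X(\hat x;a')$, $\B_\Y(\hat y;b')$ and constant $\kappa'$. The main delicate point is the self-mapping property of $T_y$: the quantitative condition $\kappa(b'+\gamma)\leq(1-\kappa\mu)a'$ is engineered precisely to guarantee it, and this is what distinguishes the SMR case from its SMsR analogue in Theorem \ref{T_SMsR-Rob}, where no fixed-point argument is needed.
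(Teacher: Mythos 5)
Your proposal is correct and follows essentially the same route as the paper: both obtain existence via a contraction argument for $x \mapsto s(y-\ph(x))$, where $s$ is the single-valued Lipschitz localization of $\F^{-1}$ supplied by the SMR of $\F$, and both derive the Lipschitz estimate by the same perturbation computation using shift-invariance and $\kappa\mu<1$. The only cosmetic differences are that you verify the self-mapping property on the closed ball and invoke the classical Banach theorem, whereas the paper checks the displacement of the center and cites the contraction principle in the form of \cite[Theorem 1A.2]{AD+TR-book2}.
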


\begin{proof}{} The proof is a modification of that of Theorem 4.2 in \cite{Pre_Sca_Vel_MReg-17}. 
By assumption, the mapping $y \mt s(y) :=\F^{-1}(y) \cap \B_\X(\hat x;a)$ is a Lipschitz
continuous function on $\B_\Y(\hat y;b)$ with respect to the metrics $d^\circ_\X$ and $d^\circ_\Y$.
For every $x \in \B_\X(\hat x; a') \st \B_\X(\hat x;a)$ and $y \in \B_\Y(\hat y; b')$ we have
\bd
      d_\Y(y-\ph(x), \hat y) \leq d_\Y(y - \hat y,0)+ d_\Y(\ph(x),0) \leq b' + \gamma \leq b,
\ed
thus $s(y-\ph(x))$ is defined for every such $x$ and $y$.

For an arbitrary fixed $y \in \B_\Y(\hat y;b')$ we consider the mapping 
$B_\X(\hat x;a') \ni x \mt Z_y(x) := s(y-\ph(x))$. 
We shall prove that the mapping 
$Z_y$ has a unique fixed point by using the contraction mapping theorem in the form of 
\cite[Theorem 1A.2]{AD+TR-book2}. For this we denote $\lambda = \kappa \mu < 1$ and estimate 
\bda
     d_\X(\hat x,Z_y(\hat x)) &=& 
      d_\X(s(\hat y), s(y-\ph(\hat x))) \leq \kappa \,d^\circ_\Y(\hat y,y-\ph(\hat x))\\
   &\leq& \kappa \,d_\Y(\hat y,y-\ph(\hat x)) \leq 
   \kappa (b'+ \gamma) \leq (1-\kappa \mu) a' = (1-\lambda) a'.
\eda
Moreover, for $x,\, x' \in B_\X(\hat x;a')$ we have
\bda
      d_\X(Z_y(x), Z_y(x')) &=& 
      \kappa \,d^\circ_\Y(y-\ph(x), y-\ph(x'))  = \kappa \,d^\circ_\Y(\ph(x), \ph(x')) \\
       &\leq&          \kappa \mu \,d_\X(x, x') = \lambda \,d_\X(x, x').
\eda
Then, according to \cite[Theorem 1A.2]{AD+TR-book2}, there exists a unique 
$x = x(y) \in B_\X(\hat x;a')$
such that $x = s(y - \varphi(x))$. The latter implies that $y - \varphi(x) \in \F(x)$, hence 
$x \in (\F + \ph)^{-1}(y) \cap B_\X(\hat x;a')$.
Moreover, $x(y)$ is the unique element of $(\F + \ph)^{-1}(y) \cap B_\X(\hat x;a')$. Indeed, 
if $x' \in (\F + \ph)^{-1}(y) \cap B_\X(\hat x;a')$, then $y \in \F(x') + \ph(x')$, 
hence $y - \ph(x') \in \F(x')$. Since as above we have $y-\ph(x') \in  B_\Y(\hat y;b)$ and 
$x' \in B_\X(\hat x;a')$, it also holds that $x' = s(y-\ph(x'))$. Thus $x' = x(y)$.
Thus the mapping 
$B_\Y(\hat y+\ph(\hat x);b') \ni y \mapsto (\F+\ph)^{-1}(y) \cap B_\X(\hat x,a')$ 
is single-valued.

Now, take two arbitrary elements $y, \, y' \in B_\Y(\hat y+\ph(\hat x);b')$ 
and let $x=s(y-\ph(x))$ and
$x'=s(y'-\ph(x'))$ be the unique solutions of $y\in \F(x) +\ph(x)$ in $B_\X(\hat x;a')$ 
corresponding to $y$ and $y'$, respectively. Then
\bda
   d_\X(x,x') &=& d_\X(s(y-\ph(x)),s(y'-\ph(x'))) \leq \kappa d^\circ_\Y(y-\ph(x), y'-\ph(x')) \\
   &\leq&  \kappa d^\circ_\Y(y,y') + \kappa d^\circ_\Y(\ph(x), \ph(x')) 
     \leq \kappa d^\circ_\Y(y,y') + \kappa \mu d_\X(x, x').
\eda  
Hence,
\bd
      d_\X(x,x') \leq \frac{\kappa}{1- \kappa \mu} d^\circ_\Y(y, y') 
    \leq \kappa' d^\circ_\Y(y,y'),
\ed
which completes the proof.

\end{proof}

In the proofs of the results mentioned in the next sections, 
the case $d_\X = d^\circ_X$ is actually used, while the relation $d^\circ_\Y \leq d_\Y$
with nonequivalent metrics is substantial.  

\begin{Remark} \label{RHR}{\em 
The above theorems are used to establish that a mapping of the type 
$\F + \ph$ is SMR or SMsR at (around, respectively)  $(\hat x, \hat y)$
if and only if $x \Mt \F(x) + \ph'(\hat x)(x-\hat x)$ is such, provided that 
$\ph$ is Fr\'echet differentiable. Since Theorems \ref{T_SMsR-Rob}
and \ref{T_SMR-Rob} have no counterpart in the H\"older case $\beta < 1$, 
one cannot use this linearization approach for nonlinearities in $\F + \ph$
even with a  Fr\'echet differentiable $\ph$. If the inclusion $0 \in \ph + \F$
represents first order optimality conditions for an optimization problem, 
it should be linear-quadratic in order to obtain H\"older regularity.
Such a situation will be shown in Sections \ref{SMayer} and \ref{Saffin}. 
}\end{Remark}

\bino
Characterizations of the SMsR and SMR properties of mappings between finite dimensional spaces
in terms of graphical derivatives of $\F$ are known, see Chapters 4.4 and 4.5 in    
\cite{AD+TR-book2}. Being established for finite dimensional spaces, these results 
concern only the case of a single metric in each of the spaces $\X$ and $\Y$.
In the case of Banach spaces, but also with single metrics, it is possible to
adapt characterizations of the Metric Subregularity property by coderivatives of the 
mapping $\F$ (see the survey paper \cite{Zheng-16}) to the SMsR property. 
There are no such characterization of neither the SMsR nor of the SMR in the 4-metrics case.

\section{ Mathematical programming problems in Banach spaces}\label{SMP}

Let $X$ and $Y$ be Banach spaces. Consider the problem
\bd
    \min\, \varphi(x), \q  f(x) \le 0,\q \q g(x)=0, \eqno{\rm (MP)}
\ed
where $\varphi:X \to \R$, $f=(f_1,\ldots,f_m)^*:X \to \R^m$, $g : X \to Y$ are $C^1$
mappings. The symbol $*$ denotes transposition; when applied to a space 
it denotes the dual space.

Let $\hat x\in X$ satisfy the constraints
$f(x) \le 0$, $g(x)=0$. Denote the set of active indices by
\bd
           I=\{i \sth f_i(\hat x)=0 \}.
\ed
The following assumption is called Mangasarian-Fromovitz constraint qualification (MFCQ).

\begin{Assumption}\label{assump1}    $g'(\hat x)X=Y$ and the gradients 
$f'_i(\hat x)$, $i\in I$, are  positively independent on the subspace $\ker g'(\hat x)$.
\end{Assumption} 

Recall that the  functionals $l_1,\ldots ,l_k\in X^*$ are  {\it positively independent on a 
subspace $L\subset X$} if 
\bd
      \sum_{i=1}^k \lambda _i l_i(x)=0 \q \forall\, x\in L,\q \lambda_i\ge0\q \forall\,i\q  
        \Longrightarrow \q  \lambda_i=0 \q \forall \,i.
\ed 
Let $\R^{m*}$ denote the space of row vectors of dimension $m$. 
The Karush-Kuhn-Tucker (KKT) theorem reads as follows.

\begin{Theorem} If $\hat x$ is a local minimizer in problem (MP) and Assumption \ref{assump1} holds, 
then  there exist $\hat\lambda\in \R^{m*}$ and $\hat y^*\in Y^*$ such that 
\bda
      &&  \varphi'(\hat x) +  \hat\lambda f'(\hat x)+ \hat y^*g'(\hat x)  = 0, \\ 
      &&  g(\hat x) = 0,  \\
       &&  f(\hat x) \le 0, \q \hat\lambda f(\hat x)  = 0,  \q  \hat\lambda\ge 0.   
\eda
\end{Theorem}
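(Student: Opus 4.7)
The plan is the classical two-step derivation: first extract a primal necessary condition on the linearized cone of admissible directions, then dualize it to produce Lagrange multipliers.

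First I would show that every $h\in\ker g'(\hat x)$ with $f'_i(\hat x)h<0$ for all $i\in I$ satisfies $\varphi'(\hat x)h\ge 0$. The tool is a Lyusternik--Graves construction: since MFCQ gives $g'(\hat x)X=Y$, the generalized implicit function theorem for $C^1$ mappings between Banach spaces with surjective derivative produces, for each small $t>0$, a point $x(t)=\hat x+th+o(t)$ satisfying $g(x(t))=0$ exactly. The active inequalities then give $f_i(x(t))=t f'_i(\hat x)h+o(t)<0$, and inactive ones remain negative by continuity, so $x(t)$ is feasible. Local optimality of $\hat x$ yields $\varphi(x(t))\ge\varphi(\hat x)$; dividing by $t$ and letting $t\to 0$ gives $\varphi'(\hat x)h\ge 0$. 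MFCQ (applied via Gordan's theorem to the restrictions of $\{f'_i(\hat x)\}_{i\in I}$ to $\ker g'(\hat x)$) also supplies an interior direction $h^\circ\in\ker g'(\hat x)$ with $f'_i(\hat x)h^\circ<0$ for all $i\in I$; approximating any $h$ satisfying only $f'_i(\hat x)h\le 0$ by $h+\varepsilon h^\circ$ and letting $\varepsilon\to 0$ extends the inequality to all such $h$.

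The resulting conclusion, restricted to the closed subspace $L:=\ker g'(\hat x)$, is exactly a Farkas alternative involving finitely many linear functionals. A standard finite-dimensional separation, applied to the image of $L$ in $\mathbb{R}^{|I|+1}$ under $h\mapsto\bigl((f'_i(\hat x)h)_{i\in I},\varphi'(\hat x)h\bigr)$, produces numbers $\hat\lambda_i\ge 0$, $i\in I$, with
\begin{equation*}
\Bigl(\varphi'(\hat x)+\sum_{i\in I}\hat\lambda_i f'_i(\hat x)\Bigr)\Big|_L=0.
\end{equation*}
Setting $\hat\lambda_i:=0$ for $i\notin I$ immediately gives $\hat\lambda\ge 0$ and the complementarity $\hat\lambda f(\hat x)=0$. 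The functional $\ell:=\varphi'(\hat x)+\hat\lambda f'(\hat x)\in X^*$ then annihilates $\ker g'(\hat x)$. Since $g'(\hat x)$ is surjective, the open mapping theorem yields closed range, so the annihilator of $\ker g'(\hat x)$ coincides with $\operatorname{range} g'(\hat x)^*$. This produces the desired $\hat y^*\in Y^*$ with $\ell=-\hat y^* g'(\hat x)$, finishing the argument.

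The technical heart is the first step: producing the feasible curve $x(t)$ from a linearized direction $h$ while preserving $g(x(t))=0$ exactly, with only $C^1$ regularity of $g$. The Lyusternik--Graves machinery is the nontrivial ingredient, and MFCQ enters twice --- surjectivity of $g'(\hat x)$ feeds the implicit-function step, while positive independence of the active gradients on $\ker g'(\hat x)$ supplies the strict interior direction $h^\circ$ needed both to guarantee the cone of admissible strict directions is nonempty and to justify the density argument. Once the primal necessary condition is in place, the dualization reduces to finite-dimensional separation plus the standard closed-range/Hahn--Banach identification of annihilators.
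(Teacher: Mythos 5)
The paper does not actually prove this theorem: it is stated as the classical Karush--Kuhn--Tucker theorem and used as a known result (the paper's original contributions begin with Theorem 3.2 on strong metric subregularity). So there is no in-paper proof to compare against; your argument has to stand on its own, and it does. It is the standard Lyusternik--Farkas--annihilator derivation, and each step is sound: surjectivity of $g'(\hat x)$ plus $C^1$ regularity gives, via the Lyusternik--Graves theorem, a feasible curve $x(t)=\hat x+th+o(t)$ on the manifold $\{g=0\}$ for any $h\in\ker g'(\hat x)$, and strict linearized decrease of the active $f_i$ keeps $x(t)$ feasible; the Gordan-type consequence of positive independence correctly yields the Slater direction $h^\circ$, and the perturbation $h+\varepsilon h^\circ$ correctly upgrades the primal condition from strict to weak linearized directions; the finite-dimensional separation applied to the image of $L$ in $\R^{|I|+1}$ is legitimate because that image is a subspace disjoint from the convex set $\R^{|I|}_-\times(-\infty,0)$, which has nonempty interior; and the closed-range identity $(\ker g'(\hat x))^\perp=\operatorname{range} g'(\hat x)^*$ produces $\hat y^*$. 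One point you leave implicit and should state: the separation a priori gives multipliers $(\mu,\lambda)\ge 0$ with $\mu$ attached to $\varphi'(\hat x)$ possibly zero, and you must rule out $\mu=0$ by evaluating $\sum_{i\in I}\lambda_i f'_i(\hat x)$ at $h^\circ$ (or by invoking positive independence directly) before normalizing $\mu=1$. This is exactly the second place MFCQ is used, so your closing remark that ``MFCQ enters twice'' is slightly misallocated --- the strict direction $h^\circ$ is needed not only for the density argument but to guarantee a nondegenerate (Fritz John $\Rightarrow$ KKT) multiplier. With that sentence added, the proof is complete.
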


Let us fix a point $(\hat x,\hat\lambda,\hat y^*)$ that satisfies the system of 
relations in the above theorem (called KKT system). We shall equivalently reformulate the KKT system
as a variational inequality:
\bd
           L'_x(x,\lambda,y^*) = 0, \q g(x) = 0, \q f(x) \in N_{R^m_+}(\lambda),
\ed
where $L(x,\lambda,y^*) =  \varphi(x)  +   \lambda  f(x) + y^* g(x) $  is the Lagrangian, and 
\bd
         N_{\R^m_+}(\lambda):= \left\{ \begin{array}{cl} 
	\big\{\alpha \in \R^m  \sth  \langle \alpha, \beta -\lambda \rangle \leq 0
	{\text{ for all } \beta \in \R^m_+}\,\big\} & \text{if } \lambda \in \R^m_+,\\ \\
	\emptyset &  \text{if } \lambda \not\in \R^m_+ \end{array}\right.
\ed
is the normal cone to $\R^m_+$ at $\lambda \in \R^m$. Obviously,  the condition  
$f(\hat x) \in N_{R^m_+}(\hat \lambda)$ incorporates the conditions 
$f(\hat x) \in R^m_+$, $\hat \lambda \geq 0$, and the {\em complementary slackness} condition 
$\hat \lambda f(\hat x) = 0$. Thus the KKT optimality condition can be recast as the inclusion
$0 \in \F(x, \lambda, y^*)$, where the set-valued mapping $\F$ is defined as 
\bd
    \F(x, \lambda, y^*) := \left( \begin{array}{c}
      f(x) - N_{R^m_+}(\lambda)  \\
         g(x) \\
          L'_x(x, \lambda, y^*)
      \end{array} \right).
\ed
Our aim in this subsection is to obtain sufficient conditions for the SMsR property 
of the mapping $\F$.
For this purpose we first have to define the domain and the image spaces 
$\X$ and $\Y$ so that $\F : \X \To \Y$.  Assume that in the Banach space $(X, \| \cdot \|)$ there is another (weaker) norm 
$\| \cdot \|^\circ$, that is $\| x \|^\circ \leq\| x \|$ for every $x \in X$.
We set
\bd
    X' = \{\zeta \in X^*  \sth \zeta \mbox{ is continuous with respect to } \| \cdot \|^\circ \}
\ed
and define for any  $\zeta\in X'$ the norm
\bd
               \| \zeta \|' :=\sup_{\| x \|^\circ \leq 1}  | \zeta(x) |.
\ed
Then we define the spaces
\bd
          \X := X   \times \R^{m*}\times Y^*, \qq \Y := \R^m \times Y \times X'. 
\ed
In the space $\X$ we define following two the norms   
\bd
     \| s \|_\X := \| x \| +|\lambda|+ \| y^* \| \quad \mbox{ and } 
      \;\;  \| s \|^\circ_\X :=  \| x \|^\circ+|\lambda|+ \| y^* \|, \qquad s = (x,\lambda, y^*) \in \X.
\ed
In the space $\Y$ we define
\bd
     \| z \|_\Y =  \| z \|^\circ_\Y =  | \xi |  + \| \eta \| + \| \zeta \|' \qquad z = (\xi, \eta, \zeta) \in \Y.
\ed
Observe that due to Assumption 2  we have $f'_i(\hat x) \in X'$ and 
$ y^* g'(\hat x) \in X'$. Thus $\F: \X \To \Y$. 

We make the following ``two-norm differentiability'' assumptions for the mappings 
$\varphi$, $f$ and $g$. 

\begin{Assumption}\label{assump2}
There exists a neighborhood $\hat O$ of $\hat x$ (in the norm $\| \cdot \|$) 
such that the following 
conditions are fulfilled for all $\Delta x\in X$ such that $\hat x + \Delta x \in \hat O$:

(i) The operator $g$ is continuously Fr\'echet differentiable in $\hat O$ in the norm 
$\| \cdot \|$, and the derivative $g'(\hat x)$ is a continuous operator 
with respect to the norm $\| \cdot \|^\circ$; moreover, the following representation holds true 
\bd
       g(\hat x + \Delta x) = g(\hat x) + g'(\hat x) \Delta x + r(\Dex) \q \mbox{ with } \q
            \|  r(\Dex) \|_Y \leq \theta(\| \Dex \|) \| \Dex \|^\circ,
\ed 
where $\theta(t) \to 0$ as $t \to 0 +$. 	

(ii)There exists a bilinear mapping $Q: X \times X \to Y$ 
such that   
\bd
     g'(\hat x + \Dex)= g'(\hat x) + Q( \Dex,\cdot) + \bar r(\Dex),  \qquad
           \|Q(x_1,x_2)\|_Y \leq C \| x_1 \|^\circ \| x_2 \|^\circ,
\ed
where $C$ is a constant and $\bar r$ satisfies 
\bd
       \sup_{\| x \|^\circ \leq 1} \| \bar r(\Dex)(x) \|_Y  \leq \theta(\| \Dex \|) \| \Dex \|^\circ. 
\ed

(iii) $\varphi$ and $f$  satisfy similar conditions with bilinear mappings $Q_0$ and $Q_f$.
\end{Assumption} 

\bino
Define the quadratic functional $\Omega:X \to \R$ as
\bd
     \Omega(x)  :=  Q_0(x,x) + \hat \lambda Q_f(x, x) + \hat y^*\, Q(x,x),
\ed
and the so-called {\em critical cone} at the point $\hat x$  as
\bd
      K :=  \{ x \in X \sth \varphi'(\hat x) x\le 0, \q  f'_i(\hat x) x \le 0\; \mbox{ for }\; 
                        i \in I,\q  g'(\hat x) x = 0\}.
\ed

\begin{Assumption}\label{assump3}
There exists a constant $c_ 0 > 0$ such that 
\bd
       \Omega(x) \geq c_0\, (\| x \|^\circ)^2 \quad \forall \, x \in K.
\ed
\end{Assumption}


\bino

\begin{Assumption}\label{assump4}(strict MFCQ) 
The image  $g'(\hat x) X$ is closed and 
\bd
\ba{rcl}
   &&   \sum_{i\in I}  \lambda_i  f_i'(\hat x)+y^* g'(\hat x) = 0 
  \q \mbox{ with $\lambda_i \geq 0$ when $\hat \lambda_i = 0$} \\\\
   && \Longrightarrow \;\lambda_i = 0\q  \forall\,i \in I, \q  y^* = 0.
\ea
\ed
\end{Assumption}

\bino
The strict MFCQ (Assumptions \ref{assump4}) implies   MFCQ 
(Assumptions \ref{assump1}). 
In particular, strict MFCQ implies the condition $g'(\hat x) X=Y$. 
Moreover,  strict MFCQ  implies that 
$(\hat x,  \hat \lambda,\hat y^*)$ is the unique KKT point for the fixed $\hat x$.

\begin{Theorem}\label{th3}
Let $\hat s = (\hat x,  \hat \lambda,\hat y^*)\in\X$ be a KKT point for problem (MP), 
and let assumptions \ref{assump2}--\ref{assump4} 
be fulfilled at this point. Then the KKT optimality mapping $\F$ is 
Strongly Metrically sub-Regular at the point $(\hat s,0)$. 

More precisely, there exist constants $a > 0$, $b > 0$ and $\kappa \geq 0$ 
such that for every $y = (\xi,  \eta, \zeta )\in\Y$  such that 
$\|y\|_\Y \le b$ and for every solution $s = (x,\lambda,y^*)\in\X$ 
of the inclusion $z= (\xi, \eta, \zeta) \in \F(s)$ with $\| x - \hat x\| \leq a$ it holds that 
\bd
      \| x - \hat x \|^\circ +| \lambda  - \hat \lambda |+ \| y^* - \hat y^* \|    
   \leq \kappa ( | \xi | + \| \eta \| + \| \zeta \|').
\ed
\end{Theorem}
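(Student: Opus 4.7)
The plan is to reduce the problem to SMsR of the linearization of $\F$ at $\hat s$ via Theorem \reff{T_SMsR-Rob}, and then to establish the linearized estimate by combining the quadratic coercivity of Assumption \reff{assump3} with the dual a priori bound provided by the strict MFCQ of Assumption \reff{assump4}. I define $\F^{lin}$ by replacing $f(x)$, $g(x)$, and $L'_x(x,\lambda,y^*)$ in the three components of $\F(s)$ by their first-order Taylor expansions at $\hat s$. Using $g(\hat x)=0$ and $L'_x(\hat s)=0$, the linearized second and third components reduce to $g'(\hat x)\Dex$ and $A\,\Dex + f'(\hat x)^*\Delta\lambda + g'(\hat x)^*\Delta y^*$, respectively, where $A := L''_{xx}(\hat s)$ is the symmetric bilinear operator associated with $\Omega$, and $\Dex := x - \hat x$, $\Delta\lambda := \lambda - \hat\lambda$, $\Delta y^* := y^* - \hat y^*$. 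By Assumption \reff{assump2} the residual $\ph := \F - \F^{lin}$ is single-valued and satisfies the two-norm calmness estimate $\|\ph(s)-\ph(\hat s)\|^\circ_\Y \leq \mu\|s-\hat s\|^\circ_\X$ on a sufficiently small $\|\cdot\|$-neighborhood of $\hat s$, with $\mu$ as small as desired. Hence Theorem \reff{T_SMsR-Rob} yields SMsR of $\F = \F^{lin}+\ph$ as soon as SMsR of $\F^{lin}$ at $(\hat s, 0)$ is established.

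For the linearized inclusion $(\xi,\eta,\zeta)\in\F^{lin}(s)$, I pair the third equation with $\Dex$ to get
\bd
\Omega(\Dex) = \zeta(\Dex) - \Delta\lambda\,f'(\hat x)\Dex - \Delta y^*\, g'(\hat x)\Dex.
\ed
The second equation gives $g'(\hat x)\Dex = \eta$, while the first inclusion together with complementary slackness at $\hat s$ implies $\Delta\lambda_i = 0$ for indices inactive at $\hat s$ (after shrinking the neighborhood) and controls $\Delta\lambda\,f'(\hat x)\Dex$ by $|\Delta\lambda|\,|\xi|$ on active indices via the perturbed complementary slackness. This bounds the right-hand side by $\|\zeta\|'\|\Dex\|^\circ + C(|\xi|+\|\eta\|)(|\Delta\lambda|+\|\Delta y^*\|)$. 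A decomposition $\Dex = v + w$ with $v \in K$ and $\|w\|^\circ \leq C_0(|\xi|+\|\eta\|)$, constructed from the openness of $g'(\hat x)$ and the positive linear independence of the $f'_i(\hat x)$ on $\ker g'(\hat x)$ (both used in the weaker norm), allows Assumption \reff{assump3} to be invoked on $v$ and yields the quadratic inequality
\bd
c_0 (\|\Dex\|^\circ)^2 \leq C_1\bigl(|\xi|+\|\eta\|+\|\zeta\|'\bigr)\bigl(\|\Dex\|^\circ + |\Delta\lambda|+\|\Delta y^*\|\bigr).
\ed

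The multipliers are then controlled by the strict MFCQ: the linear operator $(\lambda',y'^*)\mapsto \sum_{i\in I}\lambda'_i f'_i(\hat x) + y'^*g'(\hat x)$ is, by Assumption \reff{assump4}, injective on the admissible sign cone and has closed range in $X'$, hence bounded below. Together with the bilinear estimate $\|A\Dex\|'\leq C_A\|\Dex\|^\circ$ provided by Assumption \reff{assump2}, the third equation gives $|\Delta\lambda|+\|\Delta y^*\|\leq C_2(\|\zeta\|'+\|\Dex\|^\circ)$. Substituting into the quadratic inequality above and absorbing the $\|\Dex\|^\circ$ term on the right via Young's inequality produces the required linear bound $\|\Dex\|^\circ + |\Delta\lambda|+\|\Delta y^*\|\leq \kappa(|\xi|+\|\eta\|+\|\zeta\|')$ for $\F^{lin}$, which via the reduction from Step~1 transfers to $\F$.

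The main obstacle is performing the projection $\Dex = v+w$ into $K$ in the weaker norm $\|\cdot\|^\circ$ rather than in $\|\cdot\|$: this is precisely what forces the image space of the Lagrangian component to be $X'$, and is what distinguishes the two-norm statement from the classical single-norm case. It rests on continuity of $g'(\hat x)$ and of the active gradients $f'_i(\hat x)$ with respect to $\|\cdot\|^\circ$, i.e.\ their membership in $X'$, which is implicit in the hypotheses. A secondary delicacy is showing that the active index set of the perturbed KKT inclusion coincides with $I$ modulo a $\xi$-controlled error, so that the complementary-slackness-based elimination of $\Delta\lambda_i$ on inactive indices remains valid; this is handled by shrinking the neighborhoods $\O_\X(\hat x)$ and $\O_\Y(0)$.
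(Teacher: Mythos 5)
The paper itself contains no proof of Theorem \ref{th3}; it only refers to \cite{Osmol+Vel-21-JMAA} and \cite{Osmol+Vel-21-C-and-C}. Your architecture --- linearize and reduce via Theorem \ref{T_SMsR-Rob}, test the linearized stationarity equation with $\Dex$, use the perturbed complementarity to bound $-\Delta\lambda\, f'(\hat x)\Dex$ by $|\Delta\lambda|\,|\xi|$, invoke the coercivity of $\Omega$ on the critical cone after a projection, and close with the multiplier bound from strict MFCQ --- is the standard route for this result and matches the cited proofs in outline.

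There is, however, one genuine gap: the decomposition $\Dex = v + w$ with $v\in K$ and $\|w\|^\circ\le C_0(|\xi|+\|\eta\|)$. The cone $K$ includes the condition $\varphi'(\hat x)v\le 0$, which, given $g'(\hat x)v=0$, $f_i'(\hat x)v\le 0$ for $i\in I$, and the stationarity identity $\varphi'(\hat x)=-\hat\lambda f'(\hat x)-\hat y^* g'(\hat x)$, is equivalent to the \emph{equalities} $f_i'(\hat x)v=0$ for all strongly active indices ($\hat\lambda_i>0$). The tools you invoke --- openness of $g'(\hat x)$ and a Mangasarian--Fromovitz direction $e\in\ker g'(\hat x)$ with $f_i'(\hat x)e<0$ for $i\in I$ --- enforce the first two conditions but push $\varphi'(\hat x)v$ \emph{upward}, not downward; and the perturbed complementarity only yields the one-sided bound $f_i'(\hat x)\Dex\le\xi_i$, so $\varphi'(\hat x)\Dex=-\sum_{i\in I}\hat\lambda_i f_i'(\hat x)\Dex-\hat y^*\eta$ can be large and positive, in which case no small correction $w$ lands $v$ in $K$. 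Your proposed remedy of ``shrinking the neighborhoods'' does not apply here, because the theorem's neighborhood constrains only $x$, not $\lambda$. The repair is to reverse the order of your two main steps: first derive from the third (Lagrangian) equation and Assumption \ref{assump4} the bound $|\Delta\lambda|+\|\Delta y^*\|\le C_2(\|\zeta\|'+\|\Dex\|^\circ)\le C_2(b+Ca)$, which for $a,b$ small forces $\lambda_i>0$ and hence, by complementarity, $f_i'(\hat x)\Dex=\xi_i$ for every strongly active $i$; only then is the two-sided control available that legitimizes the projection onto $K$ with $\|w\|^\circ\le C_0(|\xi|+\|\eta\|)$. With that reordering, the remainder of your argument (the quadratic inequality, its resolution, and the transfer back to $\F$ via Theorem \ref{T_SMsR-Rob}) goes through.
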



A proof of the above theorem is given in \cite{Osmol+Vel-21-JMAA} and 
\cite{Osmol+Vel-21-C-and-C}.

\bino
{\large\bf Additional references:}

The above result about SMsR of the optimality mapping is formulated under 
{\em strict Mangasarian-Fromovitz conditions} together with 
{\em second-order sufficient conditions}.
In the case of finite-dimensional spaces $X$ and $Y$ the result is known from 
Dontchev and Rockafellar (1998) \cite[Theorem 2.6]{AD+TR-98} and 
Cibulka, Dontchev and Kruger (2018) \cite[Section 7.1]{Cibulka+Dontchev+Kruger-18}. 
We mention that in the first of the quoted papers also local non-emptiness of $\mathcal F^{-1}$
is proved, as well as a number of related results that substantially use the finite dimensionality. 
More about regularity properties of problem ({\rm MP})
in the finite-dimensional case can be found in \cite[Chapter 8]{Klatte+Kummer-2002-book} 
and \cite[Chapter 5.2]{Bonn+Shap-book-2000}. 

The study of SMR of the KKT mapping goes back to the Robinson \cite{Robin-80}; see
\cite{Klatte+Kummer-13} for more recent results. 

Results in Hilbert spaces can be found in \cite{D+H+M+V-00} and the references therein, however,
a single norm is used in the domain space $\X$ and the SMR property is considered, therefore
the assumptions are stronger. 

Various Lipschitz stability results related to problem  ({\rm MP})
 (in Banach spaces) and the associated Lagrange multiplies are obtained in 
\cite[Chapter 4]{Bonn+Shap-book-2000}, which, as far as we can see, 
do not imply the result in the present subsection.

\section{Mayer type optimal control problem} \label{SMayer}

In this section we consider the following Mayer's type optimal control problem: 
\be\label{E1}
             \min \ph(x(0),x(1)), 
\ee 
\be\label{E2} 
          \dot x(t)=f(x(t),u(t))\q \mbox{a.e. in}\q [0,1], 
\ee
\be\label{E3} 
           G(u(t))\le0 \q \mbox{a.e. in}\q [0,1], 
\ee
where $\ph:\R^{2n}\to\R$, $f: \R^{n+m}\to\R^n$, and $G:\R^m\to\R^k$  are of class 
$C^2$, $x\in W^{1,1}$, $u\in L^\infty$.
Again, we investigate the property of {\em Strong Metric sub-Regularity}  (SMsR) 
of the {\em optimality mapping}, associated with the system of first order 
necessary optimality conditions (Pontryagin's conditions in local form) for problem 
(\ref{E1})--(\ref{E3}).

In this and the next sections we use the following standard notations. 
The euclidean norm and the scalar product in $\R^n$ (the elements of which are regarded 
as column-vectors) are denoted by $|\cdot|$ and $\lll \cdot,\cdot \rrr$, respectively. The transpose
of a matrix (or vector) $E$ is denoted by $E^\top$. For a function $\psi :\R^p \to \R^r$ of the variable 
$z$ we denote by $\psi_z(z)$ its derivative (Jacobian), represented by an $(r \times p)$-matrix.
If $r=1$, $\nabla_z \psi(z) = \psi_z(z)^\top$ denotes its gradient (a vector-column of dimension $p$). 
Also for $r=1$, $\psi_{zz}(z)$ denotes the second derivative (Hessian), represented by 
a $(p \times p)$-matrix.
For a function $\psi :\R^{p + q} \to \R$ of the variables $(z,v)$, $\psi_{zv}(z,v)$ denotes 
its mixed second
derivative, represented by a $(p \times q)$-matrix. The space $L^k([0,T],\R^r)$, with $k = 1, 2$ 
or $k = \infty$, consists of all (classes of equivalent) Lebesgue measurable $r$-dimensional
vector-functions defined on the interval $[0,T]$, for which the standard norm $\|\cdot\|_k$ is finite. 
Often the specification $([0,T],\R^r)$ will be omitted in the notations. 
As usual, $W^{1,k} = W^{1,k}([0,T],\R^r)$ denotes the space of absolutely continuous
functions $x:[0,T] \to \R^r$ for which the first derivative belongs to $L^k$.
The norm in $W^{1,k}$ is defined as $\| x \|_{1,k} := \| x \|_k  + \| \dot x \|_k$.
As before, $\B_X(x;r)$ denotes the ball of radius $r$ centered at $x$ in a metric space $X$. 

According to (\ref{E3}), the set of admissible control values is
\bd 
     U:=\{v \in \R^m \sth G(v) \le 0\}.
\ed
Let $G_i$ denote the $i$-th component of the vector $G$. For any $v\in U$ define 
the set of active indices
\bd   
        I(v)=\{i\in\{1,\ldots,k\} \sth G_i(v)=0\}.
\ed

\begin{Assumption}\label{assum4.1}{\em (regularity of the control constraints)} 
The set $U$ is nonempty and	at each point $v\in U$ the gradients $G_i'(v)$, $i\in I(v)$ are 
linearly independent.
\end{Assumption}


Define the {\em augmented Hamiltonian}
\bd
     \bar H(x,u,p,\lambda)=p\, f(x,u)+\lambda\, G(u), \q 
         p\in\R^{n*},\q \lambda\in\R^{k*}.
\ed

If the pair $(\hat x, \hat u) \in W^{1,1} \times L^\infty$ is a weak local minimizer 
in the problem \reff{E1}--\reff{E3} then there exists a unique pair $(\hat p, \hat \lambda) 
\in W^{1,1} \times L^\infty$ such that the following  
{\em optimality system} of equations and inequalities is satisfied:
\be\label{E9} 
         -\dot{\hat x}(t)+\bar H_p (\hat x(t),\hat u(t), \hat p(t), \hat \lambda) = 0 
         \q \mbox{a.e. in}\q [0,1],
\ee
\be\label{E5}
           \dot {\hat p}(t) + \bar H_x (\hat x(t),\hat u(t), \hat p(t), \hat \lambda) = 
           0 \q \mbox{a.e. in}\q [0,1],
\ee
\be\label{E4} 
            ( -\hat p(0), \hat p(1))=\varphi'(\hat x(0),\hat x(1)), 
\ee
\be\label{E6}  
          \bar H_u (\hat x(t),\hat u(t), \hat p(t), \hat \lambda) = 0,  \q \mbox{a.e. in}\q [0,1],
\ee
\be\label{E10}  
         G(\hat u(t))\le 0 \q \mbox{a.e. in}\q [0,1].
\ee
\be\label{E3a} 
        \hat \lambda(t)\ge0, \q \hat \lambda(t)  G(\hat u(t))=0\q \mbox{a.e. in}\q [0,1],
\ee
Here, the $\bar H_p,\, \bar H_x,\, \bar H_u$ denote derivatives of $\bar H$ 
with respect to the argument
indicated as subscript. Notice that here and below, the dual variables $p$ and $\lambda$ 
are treated as row vectors, while $x$, $u$, $f$, and $G$ are column vectors.

Below, the quadruple $\hat s := (\hat x, \hat u, \hat p, \hat \lambda)$ will be any 
{\em reference solution} of the optimality system \reff{E9}--\reff{E3a} 
(not necessarily resulting from an optimal pair) in the space
\bd
           \X = W^{1,1} \times L^\infty \times W^{1,1} \times L^\infty. 
\ed 
We consider this space with the following two norms: 
\be \label{EMX}
   \| s \|_\X = \| x \|_{1,1} +  \| u \|_{\infty} + \| p \|_{1,1} + \| \lambda \|_{\infty}, \qquad
   \| s \|^\circ_\X = \| x \|_{1,1} +  \| u \|_{2} + \| p \|_{1,1} + \| \lambda \|_{2}.
\ee
Moreover, we define the space
\bd
           \Y =L^1 \times L^\infty \times \R^{2n*} \times L^\infty \times L^\infty,
\ed
also endowed with two norms of its elements $y = (\xi,\pi,\nu,\rho,\eta)$:
\be \label{EMZ}
     \| y \|_\Y = \| \xi \|_{1} +  \| \pi \|_{1} + |\nu| + \| \rho \|_{\infty} + \| \eta \|_{\infty}, \qquad
     \| y \|^\circ_\Y = \| \xi \|_{1} +  \| \pi \|_{1} + |\nu| + \| \rho \|_{2} + \| \eta \|_{2}.
\ee

Observe that conditions \reff{E10}--\reff{E3a} can be equivalently reformulated 
as an inclusion
\bd
    G(\hat u(t)) \in N_{\R_+^k}(\hat \lambda(t)),
\ed
with the standard definition of the normal cone $N_{\R_+^k}$ (see Subsection \ref{SMP}).
Then one can reformulate the optimality system as a variational inequality for $(x,u,p,\lambda) \in \X$:
\bd
         \F(x,u,p,\lambda) \ni 0,
\ed
where the {\em optimality mapping} $\F : \X \To \Y$ is defined as
\bd
      \F(x, u, p, \lambda) := \left( \begin{array}{c}
       -\dot{x}+\bar H_p (x,\hat u, \hat p, \lambda) \\ 
        \dot {p} + \bar H_x (x, u, p, \lambda) \\
        ( -\hat p(0), \hat p(1)) - \ph'(\hat x(0),\hat x(1)) \\
         \bar H_u (x, u, p, \lambda) \\
            G(u) - \N_+(\lambda),
\end{array} \right).
\ed
and 
\bd
   \N_+(\lambda) := \{\eta \in L^\infty \sth \eta(t) \in N_{\R_+^k}(\lambda(t)) 
        \;\mbox{ a.e. in }[0,1]\}.
\ed
Notice that $\N_{+}(\lambda)$ is a proper subset of the normal cone at $\lambda$ to the set of all non-negative 
$L^\infty$-functions.

In order to obtain fine sufficient conditions for strong metric sub-regularity of the mapping $\F$ at 
the reference point $(\hat x, \hat u,\hat p, \hat \lambda)$ we make one more assumption.
To formulate it, we use the notations $w := (x,u)$, $\hat w := (\hat x, \hat u)$,
$q := (x(0),x(1))$, $\hat q := (\hat x(0),\hat x(1))$.
Define the usual {\em critical cone}
\bda 
       K =\Big\{\,w\in W^{1,1} \times L^\infty \!\!\!\!\!&\sth& 
                 \!\!\!\!\dot x(t)=f'(\hat w(t))w(t)\q \mbox{ for a.e. } t \in [0,1], \\ 
           &&   G'_j(\hat u(t))u(t)\le 0 \;\;\mbox{  for a.e. $t$ for which  } G_j(\hat u(t)) = 0, \\ 
     && G'_j(\hat u(t))u(t)=0 \;\;\mbox{  for a.e. $t$ for which  } \hat \lambda(t) > 0\;\; 
         j=1,\ldots,k\,\Big\}.
\eda

Next, for any $\Delta>0$  we define the extended critical cone
\bda 
       K_\Delta =\Big\{\,w\in W^{1,1} \times L^\infty \!\!\!\!\!&\sth& 
                 \!\!\!\!\dot x(t)=f'(\hat w(t))w(t)\q \mbox{ for a.e. } t \in [0,1], \\ 
           &&   G'_j(\hat u(t))u(t)\le 0 \;\;\mbox{  for a.e. $t$ for which  } G_j(\hat u(t)) = 0, \\ 
     && G'_j(\hat u(t))u(t)=0 \;\;\mbox{  for a.e. $t$ for which  } \hat \lambda(t) > \Delta\;\; 
         j=1,\ldots,k\,\Big\}.
\eda
Notice that the cones $K_\Delta$ form a non-increasing family as $\Delta\to0+$ and  
$K \st K_\Delta$ for any $\Delta > 0$.

Define the  following {\em quadratic form} of $w \in W^{1,1} \times L^\infty$:
\be\label{12}
          \Omega(w):= \langle \ph''(\hat q)q,q\rangle +
         \int_0^1 \langle  \bar H_{ww}(\hat w(t),\hat p(t), \hat\lambda(t))w(t),w(t)\rangle\dd t.
\ee

\begin{Assumption}\label{assum4.3} There exist constants $\Delta>0$ and $c_\Delta>0$ such that 
\be\label{13}
\Omega(w)\ge c_\Delta\big(|x(0)|^2 +\|u\|_2^2\big)\q  \forall\, w\in K_\Delta.
\ee
\end{Assumption}

\begin{Theorem}\label{TMayer}  
Let  Assumptions \ref{assum4.1} and \ref{assum4.3} be fulfilled.
Then the optimality mapping $\F$ is strongly metrically sub-regular (in the four metrics 
defined in \reff{EMX}, \reff{EMZ}) at the reference point 
$((\hat x, \hat u,\hat p, \hat \lambda), 0_\Y)$, where $0_\Y$ is the origin in $\Y$.
\end{Theorem}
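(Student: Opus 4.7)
Fix a pair $(s,y) \in \X \times \Y$ with $y \in \F(s)$, $\| s - \hat s \|_\X \leq a$ and $\| y \|_\Y \leq b$, for small $a,b>0$ to be chosen. Writing $\Delta s = s - \hat s$, $\Delta w = (\Delta x, \Delta u)$, $\Delta q = (\Delta x(0), \Delta x(1))$, and subtracting the reference optimality system \reff{E9}--\reff{E3a} from the perturbed one yields
\begin{align*}
-\dot{\Delta x} + \bar H_p(s) - \bar H_p(\hat s) &= \xi, \\
\dot{\Delta p} + \bar H_x(s) - \bar H_x(\hat s) &= \pi, \\
(-\Delta p(0),\Delta p(1)) - \ph'(q) + \ph'(\hat q) &= \nu, \\
\bar H_u(s) - \bar H_u(\hat s) &= \rho, \\
G(u) - \eta \in N_{\R^k_+}(\lambda) \quad &\text{a.e.\ in } [0,1].
\end{align*}
A second-order Taylor expansion at $\hat s$ converts the first four equalities into a linear variational system for $\Delta s$, with quadratic remainders of order $\|\Delta s\|_\X \|\Delta s\|^\circ_\X$; these are negligible because the $L^\infty$ part of the neighborhood keeps the multiplicative prefactor small.

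\textbf{Step 1 (Active-set surgery and a test direction in $K_\Delta$).} Using Assumption \ref{assum4.1} and the fact that $\|\Delta u\|_\infty, \|\Delta \lambda\|_\infty$ are small, I split $[0,1]$ according to which components of $G(\hat u)$ are active and whether $\hat \lambda_j > \Delta$ or not. Starting from $\Delta u$, I subtract a correction $\delta u$ supported on the (small-measure) ``mismatch'' set where the sign/equality requirements of $K_\Delta$ are violated by $\Delta u$; on that set the pointwise Lagrange multiplier surgery, made possible by the linear independence of $\{G'_j(\hat u)\}_{j \in I(\hat u(t))}$, produces a correction whose $L^2$-norm is controlled by $\|\eta\|_2 + \|\Delta\lambda\|_2 \cdot \text{meas(small set)}$, hence by $\|y\|^\circ_\Y + o(1)\|\Delta s\|^\circ_\X$. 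Letting $\tilde u := \Delta u - \delta u$ and $\tilde x$ the solution of the linearized state equation with $\tilde x(0)=\Delta x(0)$ driven by $\tilde u$, the pair $\tilde w = (\tilde x,\tilde u)$ lies in $K_\Delta$.

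\textbf{Step 2 (Applying the coercivity and computing $\Omega(\Delta w)$).} By Assumption \ref{assum4.3}, $\Omega(\tilde w) \geq c_\Delta(|\tilde x(0)|^2 + \|\tilde u\|_2^2)$, and the surgery in Step 1 gives $\Omega(\Delta w) \geq c_\Delta(|\Delta x(0)|^2 + \|\Delta u\|_2^2) - \varrho$, where $\varrho$ is quadratically small in the correction. For the upper bound, I exploit the variational system: multiply the linearized adjoint equation by $\Delta x$, the linearized state equation by $\Delta p$, add them, integrate on $[0,1]$, and integrate by parts using the linearized transversality condition. Then insert the linearized stationarity $\bar H_{uu}(\hat s)\Delta u + \bar H_{ux}(\hat s)\Delta x + f_u(\hat s)^\top (\Delta p)^\top + G'(\hat u)^\top (\Delta\lambda)^\top = \rho + R$. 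Using the complementarity $\hat\lambda G(\hat u)=0$ together with $(G(u)-\eta)\lambda = 0$, the term involving $\Delta \lambda \cdot G'(\hat u) \Delta u$ is converted into one bounded by $(\|\Delta\lambda\|_2 + \|\hat\lambda\|_\infty)\|\eta\|_2$. The net identity reads schematically
\begin{equation*}
\Omega(\Delta w) = \langle \nu, \Delta q\rangle + \int_0^1 \big(\lll \pi, \Delta x\rrr + \lll \xi, \Delta p\rrr + \lll \rho, \Delta u\rrr\big)\,\dt + E,
\end{equation*}
where $|E| \leq C\|\eta\|_2(\|\Delta\lambda\|_2 + 1) + o(1)\|\Delta s\|^\circ_\X{}^2$.

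\textbf{Step 3 (Propagation and closing the loop).} From the linearized state equation and Gr\"onwall,
$\|\Delta x\|_{1,1} \leq C(|\Delta x(0)| + \|\Delta u\|_1 + \|\xi\|_1)$; symmetrically $\|\Delta p\|_{1,1} \leq C(|\Delta p(0)| + \|\Delta u\|_1 + \|\Delta x\|_1 + \|\pi\|_1)$, and $|\Delta p(0)|$ is controlled through the transversality by $|\nu| + C|\Delta q|$. Using Assumption \ref{assum4.1}, from the linearized stationarity $G'(\hat u)^\top(\Delta\lambda)^\top = \rho - \bar H_{uu}\Delta u - \bar H_{ux}\Delta x - f_u^\top (\Delta p)^\top - R$ and left-invertibility on the active rows, one obtains $\|\Delta\lambda\|_2 \leq C(\|\rho\|_2 + \|\Delta u\|_2 + \|\Delta x\|_{1,1} + \|\Delta p\|_{1,1})$. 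Inserting all this into the two-sided estimate of Step 2 and using Young's inequality to absorb the $\|\Delta u\|_2^2$ and $|\Delta x(0)|^2$ terms into the left-hand side (thanks to $c_\Delta>0$), I get $\|\Delta u\|_2 + |\Delta x(0)| \leq \kappa_0\|y\|^\circ_\Y$, which then propagates to all four components of $\Delta s$ in the $\|\cdot\|^\circ_\X$ norm.

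\textbf{Main obstacle.} The delicate step is the active-set surgery (Step 1): the critical cone $K_\Delta$ imposes \emph{equality} constraints where $\hat\lambda > \Delta$ and \emph{sign} constraints where $G_j(\hat u)=0$ but $\hat\lambda_j \leq \Delta$, and $\Delta u$ need not respect either. The correction must be constructed so that its $L^2$ cost is dominated by the image-side quantities $\|\eta\|_2,\|\rho\|_2$ (the weak norm) rather than by $\|\eta\|_\infty$; this is precisely where working with two norms on $\Y$ is indispensable, because a naive estimate using only $L^\infty$ would blow up the constant through $\text{meas}\{\hat\lambda \leq \Delta\}$, which cannot be made small independently of the data.
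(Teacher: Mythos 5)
The paper does not actually contain a proof of Theorem \ref{TMayer}; it defers entirely to \cite{Osm+Vel_AMOP_22}. Your architecture --- subtract the two optimality systems, linearize with remainders of order $\|s-\hat s\|_\X\,\| s-\hat s\|^\circ_\X$, correct the difference $\Delta w$ into the extended critical cone $K_\Delta$, sandwich the quadratic form $\Omega$ between the coercivity lower bound of Assumption \ref{assum4.3} and a duality-pairing upper bound obtained by integrating the linearized state/adjoint pair against each other, then close with Gronwall --- is precisely the scheme of that reference and of the Dontchev--Hager line it descends from. So the approach is the right one; but two steps in your sketch are genuine gaps as written.

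First, in Step 1 you claim the correction $\delta u$ is ``supported on the (small-measure) mismatch set.'' That is not accurate. From $G(u)-\eta\in N_{\R_+^k}(\lambda)$ one gets $G_j'(\hat u)\Delta u\le \eta_j+O(|\Delta u|^2)$ on $\{G_j(\hat u)=0\}$ and equality $G_j'(\hat u)\Delta u=\eta_j+O(|\Delta u|^2)$ where $\lambda_j>0$; the violation of the sign/equality requirements of $K_\Delta$ therefore occurs wherever $\eta\neq 0$, which need not be a set of small measure. The construction survives only because the pointwise correction furnished by the linear independence in Assumption \ref{assum4.1} can be chosen of size $O(|\eta(t)|+|\Delta u(t)|^2)$, hence with $L^2$-norm $O(\|\eta\|_2+\|\Delta u\|_\infty\|\Delta u\|_2)$, the second term being absorbable since $\|\Delta u\|_\infty\le\delta$. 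You need this pointwise bound, not a measure-of-support argument; with the latter alone, the claim $\tilde w\in K_\Delta$ does not follow.

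Second, Step 1 presumes that $\|\lambda-\hat\lambda\|_\infty$ is small (to conclude $\lambda_j>0$ wherever $\hat\lambda_j>\Delta$), but the theorem's neighborhood constrains only $\|x-\hat x\|_\infty+\|u-\hat u\|_\infty\le\delta$; nothing is assumed a priori about $p$ and $\lambda$. A preliminary step is required: from the perturbed adjoint equation and transversality condition (via Gronwall) one first bounds $\|p-\hat p\|_\infty$, and then from the stationarity condition $\bar H_u=\rho$ together with Assumption \ref{assum4.1} one solves for $\lambda-\hat\lambda$ on the active set and bounds $\|\lambda-\hat\lambda\|_\infty$ by $O(\delta+\|y\|_\Y)$. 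Your Step 3 produces an $L^2$ bound on $\lambda-\hat\lambda$ only at the end of the argument, which is too late for the active-set analysis of Step 1 to be legitimate; the order of these steps must be reversed.
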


In detail, the theorem can be formulated as follows.
There exist reals $\delta>0$
and $\kappa > 0$ such that if $z = (\xi,\pi,\nu,\rho,\eta)$ satisfies the inequality 
\be\label{21p}  
      \|\xi\|_1 + \|\pi\|_1+ |\nu|+ \|\rho\|_\infty+\|\eta\|_\infty \le \delta,
\ee 
then for any solution $s = (x,u,p,\lambda)$ of the perturbed inclusion $z \in \F(s)$ such that 
$\| x - \bar x \|_\infty + \| u - \bar u \|_\infty \le \delta$  the following estimation holds: 
\be \label{EH254}  
          \| x  - \hat x\|_{1,1} + \| u - \hat u \|_2 + \| p - \hat p\|_{1,1} + \| \lambda - \hat 
      \lambda \|_2  \le \kappa \,(\| \xi \|_{1} +  \| \pi \|_{1} + |\nu| + \| \rho \|_{2} + \| \eta \|_{2}). 
\ee
Observe that the set of points $s$ for which 
$\| x - \bar x \|_\infty + \| u - \bar u \|_\infty \le \delta$ is a ``cylindric'' neighborhood 
of $\hat s$ in the metric $\| \cdot \|_\X$. Also, the inequality in \reff{EH254}  is satisfied
in the norms $\| \cdot \|_\X^\circ$ and $\| \cdot \|_\Y^\circ$, while the neighborhoods 
of $\hat s$ and zero are with respect to $\| \cdot \|_\X$ and $\| \cdot \|_\Y$.
 
The proof of the above theorem is given in \cite{Osm+Vel_AMOP_22}.

\bino
{\large\bf Additional remarks and references.}
A similar result as Theorem \ref{TMayer} is obtained in \cite{D+H+M+V-00},
where however, the property of SMR is considered for which the conditions are stronger.

An important feature of Assumption \ref{assum4.3} is that the quadratic functional $\Omega$
has quadratic growth with respect to the norm $\| u \|_2$ of the control component of the 
elements of the extended critical cone $K_\Delta$. Such a condition is generally called {\em coercivity}
(see e.g. \cite{DH-93}). In particular, this condition is satisfied if \reff{13} holds 
for every $u \in \U - \hat u$ ($\U$ is the set of admissible controls) with the $x$-part of $w$
being the corresponding solution of the linearised equation $\dot x(t)=f'(\hat w(t))w(t)$
as in the definition of the critical cone. Assumption \ref{assum4.3} is generally weaker than 
the latter one. 
In \cite[Theorem 5]{DH-93}, Dontchev and Hager prove 
a property similar to SMR (the last term was not in use in that time) under coercivity
condition in which \reff{13} is required for all $u \in \U - \U$. This coercivity condition
proved to be sufficient for SMR and finds numerous applications in studying convergence 
of numerical methods for ODE optimal control, see e.g.
 \cite{Ang+Cor+Vel_MPS_21,CDKV-17,Dont+Hager+Veliov:RK,Dont+Hager+Veliov,%
AD+VV:CC-09,Hager-00}. 
Most of these results can, in fact, be obtained based on the SMsR property.

\section{An affine optimal control problem} \label{Saffin}

In this section, we investigate the properties of SMsR and SMR of the optimality mapping
associated with problems that are affine with respect to the control:
\be \label{Eg}
    \min \Big\{ J(u) := \int_0^T  [w(t,x(t)) + \lll d(t,x(t)),  u(t)  \rrr ] \dd t \Big\},
\ee
subject to
\be \label{Ex}
\dot x(t) = a(t,x(t))+B(t,x(t)) u(t) ,  \quad  x(0)=x^0,
\ee
\be  \label{Eu}
        u(t) \in U, \quad t \in [0,T].
\ee
The state vector $x(t)$ belongs to $\R^n$,  the control function $u$ has values 
$u(t)$ that belong to a given set $U$ in $\R^m$ for a.e.  $t \in [0, T]$.
Correspondingly, $w$ is a scalar function on $[0,T] \times \R^n$, $d$ is an $m$-
dimensional vector function, $a$ and $B$ are vector-/matrix-valued 
functions with appropriate dimensions. 
The initial state $x^0$ and the final time $T>0$ are fixed.
The set of admissible control functions $u$, denoted in the sequel by $\U$, consists of 
all Lebesgue measurable and bounded functions $u: [0,T] \to U$. 
We use the abbreviations 
\be \label{Efg}
   f(t,x,u) = a(t,x)+B(t,x) u, \qquad g(t,x,u) = w(t,x) + \lll d(t,x),  u \rrr.
\ee

Obviously the coercivity assumption in Section \ref{SMayer} is not fulfilled in general.
For problem (\ref{Eg})--(\ref{Eu}) we make the following assumption.

\begin{Assumption} \label{AA1}
The set $U$ is convex and compact; the functions $f: \R \times \R^n \times \R^m \to \R^n$
and $g: \R \times \R^n \times \R^m \to \R$ have the form as in (\ref{Efg}) and
are two times differentiable in $(t,x)$, and the derivatives
are Lipschitz continuous\footnote{\label{FNA1}The assumption of {\em global} Lipschitz 
continuity is made for convenience. Since the analysis in this paper is local 
(in a neighborhood of a reference 
trajectory $\hat x(\cdot)$ in the uniform metric), it can be replaced with 
{\em local} Lipschitz continuity. The differentiability assumption in $t$ can also be relaxed, 
especially in the consideration of the SMsR property.}. 
\end{Assumption} 

Define the Hamiltonian associated with problem (\ref{Eg})--(\ref{Eu}) as usual:
\bd
   H(t,x,p,u) := g(t,x,u) + \lll p, f(t,x,u) \rrr,\quad  p\in\R^n.
\ed
Similarly as to the previous subsection, the Pontryagin (local) maximum principle 
can be written in the form of a generalized equation (differential variational inequality)
\be \label{EFF}
    0 \in \F(x,p,u) := \left( \begin{array}{c}
	- \dot x + f(\cdot,x,u) \\
	\dot p + \nabla_{\!\! x} H(\cdot,y) \\
	\nabla_{\!u} H(\cdot,y) + N^{1,\infty}_\U(u), 
\end{array} \right),   \q y:=(x,p,u),
\ee
where 
\bd
     N^{1,\infty}_\U(u) := \{ \rho \in W^{1,\infty} \sth \lll  \rho, u' - u \rrr \leq 0 \;\;\; \forall \;
                                     u' \in \U \} 
\ed
is a subset of the normal cone to $\U$ at $u$, considered as a subspace of $L^1$. 

Assumption \ref{AA1} implies that there exists a number $M > 0$ such that for any $u \in \U$ 
the corresponding solution $x$ 
of (\ref{Ex}) and also the solution  $p$ of the adjoint equation 
$\dot p + \nabla_{\!\! x} H(\cdot,y) = 0$, $p(T) = 0$,
exist on $[0,T]$ and
\be \label{EM}
\max \{ |x(t)|, \,   |\dot x(t)|, \,  |p(t)|, \, |\dot p(t)| \} \leq M \quad \mbox{for a.e. $t \in [0,T]$}.
\ee
In what follows, $\bar M$ will be a fixed number larger that $M$.

The set-valued mapping $\F$ is considered as acting from the space 
\bd
         \X := \{(x,p,u) \in W^{1,1}\times W^{1,1}\times  L^1 : \, x(0) = x^0, \, p(T) = 0\}
         \cap \big(\B^2_{W^{1,\infty}}(0;\bar M) \times \U\big)
\ed
(where $\B^2_{W^{1,\infty}}(0;\bar M):=\B_{W^{1,\infty}}(0;\bar M)\times\B_{W^{1,\infty}}(0;\bar M)$)
with the norm
\bd
     \| (x,p,u) \|_\X := \| x \|_{1,1} +  \| p \|_{1,1} + \| u \|_{1}, 
\ed
to the space
\bd
     \Y := \{(\xi,\pi,\rho)\} = L^\infty \times L^\infty \times W^{1,\infty}.
\ed
endowed with the following two norms 
\bda
     \| (\xi,\pi,\rho) \|_\Y &:=& \|\xi\|_\infty + \|\pi\|_\infty +\|\rho\|_{1,\infty}, \\
    \| (\xi,\pi,\rho) \|_\Y^\circ &:=& \|\xi\|_1+\|\pi\|_1 +\|\rho\|_\infty.
\eda

Notice that $F(\X) \st \Y$, thanks to the independence of $\nabla_{\!u} H(\cdot,y)$ of $u$.

\bino
Let us fix a reference solution $\hat s = (\hat x, \hat p, \hat u)$ of the inclusion $0 \in \F(s)$.
We mention that such always exists since on Assumption \ref{AA1} problem  (\ref{Eg})--(\ref{Eu}) 
has a solution $(\hat x,\hat u)$. To shorten the notations  we skip arguments with ``hat" in
functions, shifting the ``hat" on the top of the notation of the
function, so that $\hat f(t) := f(t,\hat x(t),\hat u(t))$,
$\hat d(t) = d(t,\hat x(t))$, $\hat H(t) := H(t,\hat x(t),\hat u(t),\hat p(t))$,
$\hat H(t,u) := H(t,\hat x(t),u,\hat p(t))$, etc. Moreover, denote

\begin{align*}
	\hat A(t) := f_x(t,&\hat x(t),\hat u(t)), \quad
	\hat B(t) := f_u(t,\hat x(t),\hat u(t)) =B(t,\hat x(t))\\
	&\hat \sigma(t) := \nabla_u \hat H(t) = \hat B(t)^\top  \hat p(t) + \hat d(t).
\end{align*}

Let us introduce the following functional of $L^1 \ni v \mt \Gamma(v) \in \R$:
\bd
     \Gamma(v) :=\int_0^T \left[ \lll\hat H_{xx}(t) x(t), x(t) \rrr +
      2 \lll \hat H_{ux}(t) x(t), v(t) \rrr \right] \dd t,  
\ed
where $x$ is the solution of the equation $\dot{x} = \hat A x + \hat B u$ 
with initial condition $x(0) = 0$.


\begin{Assumption} \label{AA2}
There exist numbers $c_0$, $\al_0 > 0$ and $\gamma_0 > 0$ such that
\be  \label{Ecoerc1}
  \int_0^T \lll \sigma(t), v(t) \rrr  \dd t +  \Gamma(v) \geq c_0 \| v \|_1^2,
\ee 
for every $v = u - u'$ with $u, u' \in \U  \cap B_{L^1}(\hat u; \al_0)$, and
for every function $\sigma \in \B_{W^{1,\infty}}(\hat \sigma; \gamma_0)\cap (-N_\U(u'))$.
\end{Assumption}

\begin{Remark} \label{Rass_1}
{\em Notice that, in contrast to the coercivity Assumption \ref{assum4.3}, 
the above growth condition: (i) is in the space $L^1$ for the control variation; 
(ii) contains a linear term (not only quadratic).
}\end{Remark}

The following theorem is proved in \cite{Corella+Quinc+Vel-20}.

\begin{Theorem} \label{TSbi-MR}
Let Assumption \ref{AA1} be fulfilled for problem (\ref{Eg})--(\ref{Eu}) 
and let $\hat s = (\hat x, \hat p, \hat u)$ be a solution 
of the optimality system  (\ref{EFF}) for which Assumption \ref{AA2} is fulfilled. 
Let, in addition, the matrix $\hat H_{ux}(t) \hat B(t)$ be symmetric for a.e. $t \in[0,T]$.
Then the optimality mapping $F : \X \To \Y$ is strongly metrically regular around 
$(\hat s, 0)$ with respect to the above defined norm in $\X$ and norms in $\Y$.    
\end{Theorem}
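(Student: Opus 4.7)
The plan is to reduce SMR of $\F$ to SMR of an auxiliary linearized optimality mapping by invoking the Robinson-type perturbation theorem \ref{T_SMR-Rob}. First I would decompose $\F = \F_0 + \varphi$, where $\F_0$ is obtained by linearizing the smooth components of $\F$ (the state drift $f$, the adjoint drift $\nabla_x H$, and the gradient $\nabla_u H$) around $\hat s$, while keeping the normal-cone term $N^{1,\infty}_\U(u)$ intact. The remainder $\varphi$ is then a smooth single-valued mapping with $\varphi(\hat s)=0$ whose Fr\'echet derivative at $\hat s$ vanishes by construction.

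The crux of the argument is to prove that $\F_0$ is SMR around $(\hat s, 0)$. After absorbing $\hat s$-dependent constants into the disturbance, the linearized inclusion $z \in \F_0(x,p,u)$ can be identified with the KKT system of an auxiliary linear-quadratic optimal control problem
\[
   \min_{u\in\U}\int_0^T\Bigl[\lll \hat\sigma-\tilde\rho,\, u-\hat u\rrr + \tfrac12\lll \hat H_{xx}\Delta x,\Delta x\rrr + \lll \hat H_{xu}(u-\hat u),\Delta x\rrr\Bigr]\dt
\]
subject to the linearized state equation with forcing $\tilde\xi$ (the $\tilde\pi$ component is handled dually through the adjoint). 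The hypothesis that $\hat H_{ux}(t)\hat B(t)$ be symmetric is exactly what is needed to make this identification rigorous, turning $\F_0$ into a genuine symmetric LQ optimality system rather than a mere linear variational inequality. Given two solutions $(x_i,p_i,u_i)$ of $z_i\in\F_0(\cdot)$, $i=1,2$, in a small $\|\cdot\|_\X$-neighborhood of $\hat s$, I would set $v := u_1-u_2$ and test the gradient variational inequality for each solution against the other, taking $\sigma$ to be the gradient-residual of the current solution so that the required membership $\sigma\in -N_\U(u')$ from Assumption \ref{AA2} is exactly recovered. Applying (\ref{Ecoerc1}) to $v$ and collecting the linear/quadratic terms would give $\|v\|_1 \leq C\|z_1-z_2\|^\circ_\Y$. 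Propagating this by Gronwall through the linear state and adjoint equations then yields $\|x_1-x_2\|_{1,1}+\|p_1-p_2\|_{1,1}\leq C'\|z_1-z_2\|^\circ_\Y$, which is SMR of $\F_0$ in the prescribed norms.

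Theorem \ref{T_SMR-Rob} then applies to $\F = \F_0+\varphi$. Since $\varphi$ is smooth with $\varphi(\hat s) = 0$ and $\varphi'(\hat s) = 0$, the required bounds $\|\varphi(s)\|_\Y\leq\gamma$ and $\|\varphi(s)-\varphi(s')\|^\circ_\Y\leq\mu\|s-s'\|_\X$ with $\mu$ arbitrarily small follow on sufficiently small $\|\cdot\|_\X$-neighborhoods of $\hat s$, using the a priori $W^{1,\infty}$-bound built into the definition of $\X$ together with the compactness of $\U$. The asymmetric choice of norms in $\Y$ — uniform norms for the containment neighborhood, integral norms for the Lipschitz estimate — is precisely what makes the $L^1$-type metric on the control component of $\X$ compatible with the smoothness of $\varphi$.

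The main obstacle is the coercivity step described in the second paragraph: passing from the abstract growth inequality of Assumption \ref{AA2}, which is formulated with a free element $\sigma \in -N_\U(u')$, to the concrete Lipschitz bound $\|u_1-u_2\|_1 \leq C\|z_1-z_2\|^\circ_\Y$. The correct identification of $\sigma$ with the perturbed-gradient residual of the opposite solution — so that both the normal-cone membership and the cancellations with the linear term in (\ref{Ecoerc1}) work out exactly — is the key technical maneuver and the reason why the coercivity assumption is formulated with a \emph{linear} term on top of the quadratic one (cf. Remark \ref{Rass_1}). A secondary delicate point is ensuring that the Fr\'echet-like smallness of $\varphi$ in the weak $\Y$-norm still holds when the control is controlled only in $L^1$ inside $\X$; this leans on the a priori $W^{1,\infty}$-bound on $(x,p)$ and on the uniform boundedness of $\U$ arising from compactness of $U$.
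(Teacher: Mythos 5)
The survey does not reproduce the proof of Theorem \ref{TSbi-MR}; it only refers to \cite{Corella+Quinc+Vel-20}. Judged against the route that the paper itself signals (the sentence following Theorem \ref{T_SMR-Rob} and Remark \ref{RHR}), your strategy --- split $\F=\F_0+\ph$ with $\F_0$ the linearization at $\hat s$, prove SMR of $\F_0$ from Assumption \ref{AA2} via the two-solution variational-inequality argument, then transfer to $\F$ by Theorem \ref{T_SMR-Rob} --- is essentially the intended one, and your handling of the delicate points (choosing $\sigma$ as the perturbed gradient residual of the opposite solution so that $\sigma\in-N_\U(u')$ and $\sigma$ stays $W^{1,\infty}$-close to $\hat\sigma$, which is exactly why $d_\Y$ carries the $\|\rho\|_{1,\infty}$ norm; bounding the bilinear remainder terms by pairing an $L^1$ factor with an $L^\infty$ factor so that $\mu$ can be made small on a small $\|\cdot\|_\X$-ball) is sound.

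There is, however, one genuine gap: SMR, unlike SMsR, requires by Definition \ref{DSMHR} that $\F_0^{-1}(z)\cap\O_\X(\hat s)$ be \emph{non-empty} for every $z$ in a $d_\Y$-neighborhood of $0$, and your argument for ``SMR of $\F_0$'' consists only of the two-solution estimate, which yields uniqueness and the Lipschitz bound but not existence. This existence half is precisely where the symmetry of $\hat H_{ux}(t)\hat B(t)$ and the identification of $z\in\F_0(s)$ with the optimality system of the auxiliary LQ problem are actually cashed in: one needs that the perturbed LQ problem admits a minimizer (compactness/convexity of $U$, linear dynamics) and that the Pontryagin system of that LQ problem is \emph{exactly} the perturbed inclusion --- which requires the quadratic form to be the second derivative of a functional, i.e.\ the symmetry hypothesis --- so that the minimizer furnishes a point of $\F_0^{-1}(z)$, which the coercivity estimate then localizes into $\O_\X(\hat s)$. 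You set up this identification and correctly attribute the symmetry hypothesis to it, but you never close the loop by deducing non-emptiness from it; as written, your argument would only establish strong metric \emph{sub}regularity-type conclusions plus local uniqueness. Adding that step (and checking completeness of $(\X,\|\cdot\|_\X)$, needed for the contraction argument in Theorem \ref{T_SMR-Rob}, which holds since $\U$ and the $W^{1,\infty}$-balls are closed in the respective $L^1$ and $W^{1,1}$ topologies) would complete the proof.
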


The following assumption is stronger but more compact than Assumption \ref{AA2}.

\begin{Assumption} \label{AA2'}
There exist numbers $c_0$, $\al_0 > 0$ and $\gamma_0 > 0$ such that
\be \label{EA2'}
    \int_0^T |\lll \sigma(t), v(t) \rrr |  \dd t +  \Gamma(v) \geq c_0 \| v \|_1^2,
\ee 
for every function $\sigma \in \B_{W^{1,\infty}}(\hat \sigma; \gamma_0)$ 
and for every $v \in \U - \U$ with $\| v \|_1 \leq \al_0$.
\end{Assumption}

Obviously Assumption \ref{AA2'} implies \ref{AA2}, since for 
$\sigma \in - N_\U(u')$ and $u \in \U$ it holds that $\lll \sigma(t), u(t) - u'(t) \rrr \geq 0$. 

\bino
Now, we focus on the first-order term in (\ref{Ecoerc1}) under an additional condition. 

\begin{Assumption} \label{AB}
The set $U$ is a convex and compact polyhedron. Moreover, there exist numbers 
$\kappa > 0$ and $\tau > 0$ such that for every unit vector $e$ parallel to some edge 
of $U$ and for every $s \in [0,T]$
for which $\lll \hat \sigma(s), e \rrr = 0$ it holds that
\bd
     | \lll \hat \sigma(t), e \rrr | \geq \kappa | t - s |   \qquad t \in [ s - \tau, s + \tau] \cap [0,T].
\ed
\end{Assumption}

%

\begin{Proposition} \label{PB}
Let Assumptions \ref{AA1} and \ref{AB} be fulfilled. Then there exist numbers 
$c_0$, $\al_0 > 0$ and $\gamma_0 > 0$ such that
\be \label{EB}
	\int_0^T |\lll \sigma(t), v(t) \rrr |  \dd t  \geq c_0 \| v \|_1^2,
\ee 
for every function $\sigma \in \B_{W^{1,\infty}}(\hat \sigma; \gamma_0)$ 
and for every $v \in \U - \U$ with $\| v \|_1 \leq \al_0$.
\end{Proposition}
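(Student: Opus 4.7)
The strategy is to reduce the vector-valued inequality to finitely many one-dimensional ``bang-bang'' estimates by exploiting the polyhedral structure of $U$: decompose $v(t)$ along a finite set of edge directions of $U$, apply the linear growth at zeros of the edge-switching functions $\langle\hat\sigma,e\rangle$ guaranteed by Assumption \ref{AB}, and conclude by a scalar estimate.

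First, I would establish a perturbation lemma for the edge-switching functions. The polytope $U$ has finitely many edges, giving a symmetric finite set of unit edge directions $\{e_1,\ldots,e_N\}$. For each $i$ set $\hat\varphi_i(t):=\langle\hat\sigma(t),e_i\rangle$; Lipschitz regularity of $\hat\varphi_i$ (with constant depending on $\|\hat\sigma\|_{W^{1,\infty}}$) combined with the linear-growth bound at each of its zeros forces $\hat\varphi_i$ to have at most finitely many zeros on $[0,T]$, all simple. For $\sigma\in\B_{W^{1,\infty}}(\hat\sigma;\gamma_0)$ with $\gamma_0$ small enough, a Lipschitz implicit-function argument at each simple zero of $\hat\varphi_i$ produces a unique nearby zero $s_i^\sigma$ of $\varphi_i^\sigma:=\langle\sigma,e_i\rangle$, with $|\varphi_i^\sigma(t)|\geq (\kappa/2)|t-s_i^\sigma|$ on $[s_i^\sigma-\tau/2,s_i^\sigma+\tau/2]$, and a uniform positive lower bound $|\varphi_i^\sigma(t)|\geq c_1$ off these neighborhoods.

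Second, I would decompose $v(t)\in U-U$ measurably along edge directions. Since $U-U$ is a centrally symmetric convex polytope whose edges are parallel to those of $U$, a Carath\'eodory/walk-along-edges construction in the 1-skeleton of $U-U$ yields, for every $w\in U-U$, nonnegative coefficients $\alpha_i(w)\geq 0$ with $w=\sum_i\alpha_i(w)\,e_i$ and $c_\#|w|\leq\sum_i\alpha_i(w)\leq C_\#|w|$, for constants depending only on $U$. A standard measurable-selection theorem produces $t$-measurable $\alpha_i$ realizing this decomposition for $v(t)$. The crucial extra step, using the symmetry $e_i\leftrightarrow -e_i$ of the edge-direction set, is to choose the decomposition so that for each $t$ the products $\alpha_i(t)\varphi_i^\sigma(t)$ carry a common sign, yielding
\[
   |\langle\sigma(t),v(t)\rangle|\;\geq\;c'\sum_i\alpha_i(t)\,|\varphi_i^\sigma(t)|
\]
on a full-measure set.

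Third, combining the two ingredients gives the bound. Let $Z^\sigma:=\bigcup_i\{\text{zeros of }\varphi_i^\sigma\}$, a finite set of uniformly bounded cardinality, and let $A_\delta$ be its $\delta$-neighborhood in $[0,T]$. Off $A_\delta$ we have $|\varphi_i^\sigma(t)|\geq c_2\delta$ uniformly in $i$, hence
\[
  \int_0^T|\langle\sigma(t),v(t)\rangle|\,\dt
  \;\geq\; c_3\delta\int_{[0,T]\setminus A_\delta}|v(t)|\,\dt
  \;\geq\; c_3\delta\bigl(\|v\|_1-C_4\delta\bigr),
\]
where the last step uses the pointwise bound $|v(t)|\leq \mathrm{diam}(U-U)$ to control $\int_{A_\delta}|v|\leq C_4\delta$. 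Optimising by $\delta=\|v\|_1/(2C_4)$, admissible once $\alpha_0$ is chosen small enough that $\delta\leq\tau/2$, yields the claim with $c_0=c_3/(4C_4)$.

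The main obstacle is the sign-aligned decomposition in the second step: a naive edge decomposition only produces the triangle inequality in the wrong direction, $|\langle\sigma,v\rangle|\leq\sum\alpha_i|\varphi_i^\sigma|$. The polyhedral nature of $U$ (as opposed to a general convex $U$) is what makes an aligned decomposition possible; carrying out this selection jointly measurably in $t$ and uniformly as $\sigma$ varies in the ball around $\hat\sigma$, while preserving the two-sided estimate $\sum_i\alpha_i(t)\sim|v(t)|$, is the combinatorial heart of the proof and the step where the structural Assumption \ref{AB} is genuinely used.
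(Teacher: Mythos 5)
Your overall architecture --- finitely many well-separated zeros of the edge-switching functions $\langle\hat\sigma(\cdot),e\rangle$, a splitting of $[0,T]$ into a bad set of measure $O(\delta)$ and a good set where the switching functions are bounded below by $c\delta$, followed by the optimization $\delta\sim\|v\|_1$ --- is the standard route to results of this type, and your Steps 1 and 3 are essentially the right skeleton. The proof fails, however, exactly at the step you single out as the combinatorial heart: the sign-aligned edge decomposition of an arbitrary $v\in\U-\U$ does not exist. Take $U=[0,1]^2$, $\sigma(t)=\hat\sigma(t)\equiv(1,-1)$ and $v(t)\equiv(\epsilon,\epsilon)\in U-U$. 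Assumption \ref{AB} holds vacuously (here $\langle\hat\sigma(t),e\rangle\in\{1,-1\}$ for every unit edge direction $e$, so there are no zeros $s$), yet $\langle\sigma(t),v(t)\rangle\equiv 0$ while $\|v\|_1>0$. Hence no decomposition $v=\sum_i\alpha_i e_i$ with $\alpha_i\ge0$, $\sum_i\alpha_i\ge c_\#|v|$ and all products $\alpha_i\langle\sigma,e_i\rangle$ of a common sign can exist: it would give $|\langle\sigma,v\rangle|=\sum_i\alpha_i|\langle\sigma,e_i\rangle|=\sum_i\alpha_i>0$. Note that this example defeats the inequality \reff{EB} itself, not just your route to it --- cancellation inside the scalar product at a fixed time cannot be excluded for a general element of $\U-\U$, however non-degenerate $\sigma$ is on the edges. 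The estimate is available only in the restricted form in which it is actually consumed by Assumption \ref{AA2}, namely for $v=u-u'$ with $\sigma\in-N_\U(u')$ (this is how the result is established in the source \cite{Corella+Quinc+Vel-20}). Under that restriction the alignment is automatic and has nothing to do with the symmetry $e\leftrightarrow-e$: at times where $|\langle\sigma(t),e\rangle|\ge c\delta$ for all edge directions, $u'(t)$ is the unique minimizing vertex of $\langle\sigma(t),\cdot\rangle$ over $U$, so $u(t)-u'(t)$ lies in the tangent cone generated by the edges emanating from $u'(t)$ and decomposes with nonnegative coefficients along directions $e_j$ with $\langle\sigma(t),e_j\rangle\ge c\delta$, yielding the pointwise bound $\langle\sigma(t),u(t)-u'(t)\rangle\ge c\delta\,|u(t)-u'(t)|$ that your Step 3 needs.

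A secondary gap in Step 1/Step 3: for the optimal choice $\delta=\|v\|_1/(2C_4)$, which may be arbitrarily small compared with $\gamma_0$, the estimate $|\langle\sigma(t),e\rangle|\ge\kappa\,\mathrm{dist}(t,Z)-\gamma_0$ coming from $\|\sigma-\hat\sigma\|_\infty\le\gamma_0$ gives nothing on the good set. One really has to use the $W^{1,\infty}$-closeness (the derivative bound) to show that each perturbed switching function $\langle\sigma(\cdot),e\rangle$ inherits the linear growth of Assumption \ref{AB} near its own zero set with constants $\kappa/2$, $\tau/2$; your ``unique nearby zero'' claim also needs care, since $|\langle\hat\sigma(t),e\rangle|\ge\kappa|t-s|$ does not force a sign change at $s$, and the perturbed function may have several zeros, or none, near $s$. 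This part is repairable, but it is not delivered by a one-line Lipschitz implicit-function argument.
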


This proposition implies that the linear term in \reff{Ecoerc1} alone can ensure satisfaction 
of Assumption~\ref{AA2}. The term $\Gamma(v)/\| v \|_1^2$ may take even negative 
values, provided that they are smaller than $c_0$ in \reff{EB}.

\bino
{\bf Strong metric subregularity.}
Now we briefly discuss the SMsR property, following, slightly modifying results in \cite{Osmol+Vel-19}.

\begin{Assumption} \label{AA2p}
There exist numbers $c_0$ and $\al_0 > 0$ such that 
\reff{Ecoerc1} is fulfilled with $\sigma = \hat \sigma$ for every 
$v \in \U - \hat u$ with $\| v \|_1 \leq \al_0$.
\end{Assumption}

Notice that the last assumption is weaker than Assumption \ref{AA2} in two respects:
first, the function $\sigma$ in the latter assumption is fixed to $\hat \sigma$, and second,
the variation $v$ in Assumption \ref{AA2p} has the more specific form $v = \U - \hat u$ 
instead on $v \in \U - \U$. Of course, Remark \ref{Rass_1} also applies to Assumption~\ref{AA2}. 

Moreover, the domain space $\X$ and the range space $\Y$ of the optimality 
mapping $\F$ in \reff{EFF} change, so that $\Y$ becomes substantially larger
(that is, a larger class of disturbances is allowed): now
\bd
   \X := \{(x,p,u) \in W^{1,1}\times W^{1,1}\times  L^1 : \, x(0) = x^0, \, p(T) = 0\}
         \;\mbox{ with }   \| (x,p,u) \|_\X := \| x \|_{1,1} +  \| p \|_{1,1} + \| u \|_{1} 
\ed
and
\bd
     \Y := \{(\xi,\pi,\rho)\} = L^1 \times L^1 \times L^\infty \q\mbox{ with } \;
      \| (\xi,\pi,\rho) \|_\Y = \| (\xi,\pi,\rho) \|_\Y^\circ = \|\xi\|_1+\|\pi\|_1 +\|\rho\|_\infty.
\ed

It was proved in \cite{Osmol+Vel-19} that under Assumptions \ref{AA1} and \ref{AA2p} 
the optimality mapping $\F$ is strongly metrically subregular at the reference pair 
$(\hat x, \hat p, \hat u)$ and $0$ in the latter spaces $\X$ and $\Y$. 

An analog of Proposition \ref{PB} is also valid for the SMsR. Namely,
under Assumptions \ref{AA1} and \ref{AB}, the inequality
\bd
	\int_0^T \lll \hat \sigma(t), u - \hat u \rrr  \dd t  \geq c_0 \| u - \hat u \|_1^2 
\ed 
holds for every $u \in \U$ -- sufficiently close to $\hat u$ in $L^1$.

\bino
We mention that Assumption \ref{AA2p} is also a sufficient condition for 
strict week local optimality in $L^1$. Even more, it was proved in \cite{Osmol+Vel-19}
that the condition similar to Assumption \ref{AA2} with \reff{Ecoerc1} replaced with 
\bd  
  \int_0^T \lll \sigma(t), v(t) \rrr  \dd t +  \frac{1}{2} \Gamma(v) \geq c_0 \| v \|_1^2
\ed 
is also sufficient for strict optimality. One can prove that the latter condition is weaker than 
Assumption~\ref{AA2} in the case where $\Gamma(v)$ may take negative values.
 
\bino 
Finally, Assumption \ref{AA2p} can be weakened by replacing \reff{Ecoerc1} with 
\bd 
      \int_0^T \lll \sigma(t), u(t) - \hat u(t) \rrr  \dd t +  \Gamma(u-\hat u) 
     \geq c_0 \| u-\hat u \|_1^\kappa,
\ed
for every $u \in \U  \cap B_{L^1}(\hat u; \al_0)$, where $\kappa \in (1,2]$.
This condition implies strong H\"older subregularity of $\F$, however if $\kappa < 2$
this is true provided that the considered problem is linear-quadratic, that is,
$f(t,x,u) = A(t) x + B(t) u$, $g(t,x,u) = w(t)x + W(t) x + C(t) u$.

\bino
{\large\bf Additional references:} 
The SMR property is investigated for affine problems only in a few papers,
starting with linear systems in \cite{MQ+VV:SICON-13}, where two norms
in the image space were used. The need of these two norms for a relevant 
representation of the concept of SMR for affine problems is explained in detail in
\cite{Qui+Scar+Vel-20}. The non-linear affine case is considered in \cite{Corella+Quinc+Vel-20},
where Assumption \ref{AA2} is introduced and Theorem \ref{TSbi-MR} is proved.
A condition similar to Assumption \ref{AB} is introduced \cite{Agr+Stef+Z-02} 
in connection with a method for solving affine optimal control
problems by optimal localization of the switching points.
A similar condition is used for box-like sets $U$ in 
\cite{Felge2003} to obtain results that are related to (but stronger then) SMsR.   
The weaker Assumption \ref{AA2p} is introduced in \cite{Osmol+Vel-19},
under which SMsR of the optimality mapping is proved. 
There are numerous related results that investigate stability with respect to 
perturbations of the solutions of affine problems under various assumptions 
on the structure of the optimal control (bang-singular, bang-singular-bang, etc.), 
see e.g. \cite{Felg_Pogg_Stef-09,Pogg+Stef-20}.

There is an amount of literature on direct discretization methods for affine problems,
where the challenge is the usual discontinuity of the optimal controls. 
The property of SMsR is either explicitly used or is hidden behind the technicalities of the proofs.
Here, we mention \cite{Alt+Baier+Lem+Ger-13,Alt+F+Seyd-18,Alt+S+S-16,Osmol+Vel-19} 
for error estimates 
of the Euler scheme and \cite{Scarinci_Vel-17} for higher order estimation of a discretization technique
based on Volterra-Fliess expansion of the right-hand side of the ODE.
We also mention a series of works by Hager and co-authors that use 
the switching points as free variables and allow combinations of bang and singular arcs,
see e.g. \cite{Ag+Hager-21,Hager-23}. As noticed in \cite{Cibulka+Dontchev+Kruger-18}
(and earlier in \cite{Bonnans-94} under different conditions),
SMsR of the optimality system in principle implies convergence of the Newton method.
For affine optimal control problems this is made explicit in \cite{Felg-16,Pre_Sca_Vel_MReg-17}.

\section{Optimal control of parabolic equations} \label{SSEllip}

This section is devoted to the study of strong metric Hölder sub-regularity of the 
optimality mapping of an optimal control problem for a semilinear parabolic PDE.
Let $\Omega\subset\mathbb R^n$, $1\le n\le 3$, be a bounded domain with Lipschitz 
boundary $\partial \Omega$. Given a finite time $T>0$, we denote by 
$Q:=\Omega\times (0,T)$ the space-time cylinder, and its lateral boundary by 
$\Sigma:=\partial \Omega \times (0,T)$. We consider the optimal control problem
\begin{equation} \label{EOF}
      \  \ \min_{u \in \mathcal U}\bigg\{ J(u) := \int_Q L_0(x,t,y(x,t))+ g(x,t)u(x,t)
    \dd x\dd t\bigg\},
\end{equation}
subject to
\begin{equation} \label{befsee1}
	\left\{ \begin{array}{lll}
		\big(\frac{\partial }{\partial t}+\mathcal A\big)y+f(\cdot,y)=u\  &\text{ in }\ Q,\\
		y=0 \text{ on } \Sigma,\quad y(\cdot,0) =  y_0\ &\text{ on } \Omega.\\
	\end{array} \right.
\end{equation}
The set of feasible controls is 
\begin{equation}
	\mathcal U := \{u \in L^\infty(Q) \vert \ u_a \le u \le u_b\ \text{ for a.a. } (x,t) \in Q\},
	\label{befconstpara}
\end{equation}
where $u_a, u_b\in L^\infty(Q)$ with $ u_{a} < u_{b} $ a.e in $ Q$. 

\begin{Assumption}\label{A1}
\begin{itemize}
\item[(i)] The operator $ \mathcal A :H^1_0(\Omega)\to H^{-1}(\Omega)$, is given by
\bd
	      \A y:=-\sum_{i,j=1}^{n}\partial_{x_j}(a_{i,j}(x)\partial_{x_i} y),
\ed
where $a_{i,j}\in L^\infty(\Omega)$. Further, the  $a_{i,j}$ satisfy the uniform ellipticity 
condition
\bd
		\exists \lambda_{\mathcal A}>0:\ \lambda_{\mathcal A}\vert\xi\vert^2\leq    
                   \sum_{i,j=1}^{n} a_{i,j}(x)\xi_{i}\xi_{j} \;\;
		\textrm{ for all } \xi\in \mathbb{R}^n \;\; 	 \textrm{and a.a.}\ x\in \Omega.
\ed
		
\item [(ii)] The functions $f:Q \times \mathbb{R}\longrightarrow \mathbb{R}$
and $L(x,t,y,u):=L_0(x,t,y)+g(x,t)u:Q \times \mathbb{R}^2 \longrightarrow \mathbb{R}$ 
are Carath\'eodory function of class $C^2$ with respect to the second variable. 
In addition,
		\begin{align*}
			\left\{\begin{array}{l}
				f(\cdot,\cdot,0)\in L^\infty(Q) \text{ and }\, \frac{\partial f}{\partial y}(s,x,y)    
         \geq 0 \  \forall y \in \mathbb{R},\;\;
           L_0(\cdot,\cdot,0) \in L^1(Q), \, g\in L^\infty(Q), \vspace{1mm}\\
				\displaystyle \forall M>0\ \exists C_M, L_M, \e >0 \text{ such that }
              \forall \, |u|, |y|, |y_1|, |y_2 | \le M  \mbox{ with }  |y_2 - y_1|  \le	\e \\
       \left\vert  
      \frac{\partial f}{\partial y}(s,x,y)\right\vert+
				\left\vert\frac{\partial^2 f}{\partial y^2}(s,x,y)\right\vert \leq C_M, 
   \\ \displaystyle \Big\vert\frac{\partial L}{\partial y}(s,x,y,u)\Big\vert+ \Big\vert
      \frac{\partial^2L}{\partial y^2}(x,y,u)\Big\vert \le C_M, \\
				\left\vert\frac{\partial^2f}{\partial y^2}(s,x,y_2) - 
      \frac{\partial^2f}{\partial y^2}(s,x,y_1)\right\vert \leq L_M |y_2-y_1|, \\
  \displaystyle\left\vert\frac{\partial^2L}{\partial y^2}(s,x,y_2,u) - 
     \frac{\partial^2L}{\partial    y^2}(s,x,y_1,u)\right\vert \leq  L_M |y_2-y_1| 
         \end{array}\right.
     \end{align*}
		for almost every $(s,x) \in Q$.
	\end{itemize}
\end{Assumption}

As usual, the Hilbert space $W(0,T)$ consists of all of functions in 
$L^2(0,T\text{; }H^1_0(\Omega))$ that have a distributional derivative 
in $L^2(0,T\text{; }H^{-1}(\Omega))$, 
which is endowed with the norm
\[
		\|   y\|   _{W(0,T)}:=\|    y\|   _{L^2(0,T;H^1_0(\Omega))}+
     \| \partial y/\partial t \|   _{L^2(0,T;H^{-1}(\Omega))}.
\]
By definition, $y \in L^\infty(Q) \cap W (0, T)$ is a solution of \reff{befsee1} if
\bd
	\int_Q \Big(\frac{\dd}{\dt}y+\mathcal A y\Big)\psi \dx\dt=\int_Q (-f{(\cdot,y)} + u) 
      \psi\dx \dt, \text{ for all }\psi\in L^2(0,T, H^1_0(\Omega)).
\ed

Below we formulate as propositions some known facts that will be used in the formulation
of the main result in this sub-section. Further on, $r$ will be a fixed number satisfying
\bd
r > \max \{2, 1 + n / 2\}. 
\ed

\begin{Proposition}[\cite{ECKK2022,CM2020}]
For every $u\in  L^r(Q)$ there exists a unique solution $y_u$ of \eqref{befsee1}. 
Moreover, if $u_k\rightharpoonup u$ weakly in $L^{r}(Q)$, then 
	\begin{equation*}
		\|y_{u_k}-y_u\|_{L^{\infty}(Q)}+\|y_{u_k}-y_u\|_{L^2(0,T;H^1_0(\Omega))}\to 0.
	\end{equation*}
	\label{befestsemeq}
\end{Proposition}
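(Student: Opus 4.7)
The plan is to proceed in three stages: well-posedness together with an $L^\infty$ bound for fixed $u\in L^r(Q)$, uniform a priori estimates along the sequence $\{u_k\}$ yielding a subsequence converging strongly in $L^2$, and finally an upgrade to the two stronger norms appearing in the claim.

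For existence and uniqueness with fixed $u\in L^r(Q)$, I would invoke a standard Galerkin / monotone-operator argument relying on the monotonicity $\partial f/\partial y\geq 0$ from Assumption~\ref{A1}(ii), the uniform ellipticity of $\mathcal A$, and the local bounds on $f$ and its derivatives. The $L^\infty$ bound $\|y_u\|_{L^\infty(Q)}\leq C\big(\|u\|_{L^r(Q)},\|y_0\|_{L^\infty(\Omega)}\big)$ follows from Stampacchia's truncation method applied to $(y_u-k)^+$: the monotonicity of $f$ discards the nonlinear contribution, and the parabolic Sobolev embedding closes the estimate precisely because $r>1+n/2$. Uniqueness follows by testing the difference of two solutions against itself and invoking monotonicity together with Gronwall.

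Now let $u_k\rightharpoonup u$ weakly in $L^r(Q)$. Then $\{u_k\}$ is bounded in $L^r(Q)$, so by the previous step $\{y_{u_k}\}$ is bounded in $L^\infty(Q)$. Testing \eqref{befsee1} with $y_{u_k}$ and exploiting ellipticity together with the $L^\infty$ bound on $f(\cdot,y_{u_k})$ yields a uniform bound in $L^2(0,T;H^1_0(\Omega))$; solving \eqref{befsee1} for $\partial_t y_{u_k}$ then bounds this derivative uniformly in $L^2(0,T;H^{-1}(\Omega))$. The Aubin--Lions lemma delivers a subsequence converging strongly in $L^2(Q)$ and a.e.\ in $Q$; by dominated convergence (using the uniform $L^\infty$ bound) $f(\cdot,y_{u_k})\to f(\cdot,y)$ in $L^q(Q)$ for every $q<\infty$, so I can pass to the limit in the weak formulation of \eqref{befsee1} and identify the limit as $y_u$. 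Uniqueness forces the whole original sequence to converge in $L^2(Q)$.

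The delicate part is upgrading to the two norms in the statement. Set $z_k:=y_{u_k}-y_u$; then $z_k$ solves the linear parabolic problem
\begin{equation*}
	\tfrac{\partial}{\partial t}z_k+\mathcal A z_k+b_k\, z_k = u_k-u,\qquad z_k(\cdot,0)=0,\qquad z_k|_\Sigma=0,
\end{equation*}
with $b_k(x,t):=\int_0^1\tfrac{\partial f}{\partial y}\big(x,t,y_u+\tau z_k\big)\,\mathrm d\tau\geq 0$ uniformly bounded in $L^\infty(Q)$. Testing with $z_k$, dropping the nonnegative term involving $b_k$, and using that $\int_Q (u_k-u)\,z_k\to 0$ (because $u_k-u\rightharpoonup 0$ in $L^r(Q)$ while $z_k\to 0$ strongly in $L^{r'}(Q)$, by interpolation between the $L^2$-convergence from the previous step and the uniform $L^\infty$ bound) delivers the convergence in $L^2(0,T;H^1_0(\Omega))$. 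For the $L^\infty(Q)$ convergence I would exploit the fact that, when $r>1+n/2$, the solution operator of the linear equation above maps $L^r(Q)$ compactly into $L^\infty(Q)$: De~Giorgi--Nash--Ladyzhenskaya Hölder estimates provide a uniform $C^{\alpha,\alpha/2}(\overline{Q})$ bound on $\{z_k\}$ for some $\alpha>0$, after which Arzelà--Ascoli extracts a uniformly convergent subsequence; since the limit is $0$ by the $L^2$-convergence, the whole sequence converges in $L^\infty(Q)$. This parabolic compactness step is the crux of the argument and the one place where $r>1+n/2$ is indispensable.
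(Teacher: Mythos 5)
The paper does not prove this proposition: it is quoted as a known result from the cited references (Casas--Kunisch and Casas--Mateos), so there is no in-paper argument to compare against. Your sketch is essentially the standard proof from that literature -- monotone truncation plus $r>1+n/2$ for the $L^\infty$ bound, Aubin--Lions for strong $L^2$ convergence, the weak-strong pairing $\int_Q(u_k-u)z_k\to 0$ for the $H^1_0$ energy norm, and De~Giorgi--Nash H\"older compactness for the uniform convergence -- and I see no gap in it; the only point deserving a remark is that the boundary H\"older estimate must be justified on a merely Lipschitz domain, which is fine since such domains satisfy the exterior measure-density condition needed for De~Giorgi--Nash boundary regularity.
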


This propositions implies, in particular, that a globally optimal solution of the considered
problem does exist in $L^r(Q)$, see e.g. 
\cite[Theorem 5.7]{Troltzsch2010}, hence in $L^\infty(Q)$, due to the control constraints.

We denote the control-to-state operator $G_r:L^r(Q)\to W(0,T)\cap L^\infty(Q)$, 
assigning to each $u$, the unique state $y_u$, i.e., $G_r(v):=y_u$. 

\begin{Proposition} \label{befderivative}
The control-to-state operator is of class $C^2$ and for every $u,v,w\in L^r(Q)$, 
it holds that $z_{u,v}:=\mathcal \mathcal  G_r'(u)v$ is the solution of 
\begin{align}
		\left\{\begin{array}{l}
		\big(\frac{\partial }{\partial t}+\mathcal A\big)z+f_y(x,t,y_u) z =v \;\;  {\rm in }\;\; Q,
			\\ z=0 \; \; {\rm on }\;\; \Sigma,\;\; z(\cdot,0)=0 \;\; {\rm on }\;\; \Omega,
		\end{array} \right.
		\label{befstddt}
\end{align}
and $\omega_{u,(v,w)}:=\mathcal \mathcal  G_r''(u)(v,w)$ is the solution of
\begin{align}
		\left\{\begin{array}{l}
				\big(\frac{\partial }{\partial t}+\mathcal A\big)z+f_y(x,t,y_u) z =
                  -f_{yy}(x,t,y_u) z_{u,v } z_{u,w} \;\;{\rm in }\;\; Q,
			\\ z=0 \;\;{\rm on }\;\, \Sigma,\ z(\cdot,0)=0\;\; {\rm on }\;\; \Omega.
		\end{array} \right.
		\label{befstdddt}
\end{align}
\end{Proposition}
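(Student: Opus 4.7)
The approach is to set the defining equation for $y_u$ as an abstract implicit function problem and apply the implicit function theorem (IFT). With $Y := W(0,T) \cap L^\infty(Q)$ equipped with its natural norm, consider
\[
\Phi : L^r(Q) \times Y \longrightarrow L^r(Q) \times L^2(\Omega), \q
\Phi(u,y) := \bigl( (\partial_t + \mathcal A) y + f(\cdot,y) - u,\; y(\cdot,0) - y_0 \bigr),
\]
so that $\Phi(u, G_r(u)) = 0$ by Proposition~\ref{befestsemeq}. If $\Phi$ is of class $C^2$ and $\partial_y \Phi(u, y_u)$ is a linear isomorphism, then the IFT yields $G_r \in C^2(L^r(Q); Y)$, and \reff{befstddt}, \reff{befstdddt} are obtained by differentiating the identity $\Phi(u, G_r(u)) = 0$ once and twice in $u$ via the chain rule.

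The linear parts of $\Phi$ (the operator $\partial_t + \mathcal A$, the trace at $t=0$, and the dependence on $u$) are continuous linear, hence $C^\infty$. The substantive point is the $C^2$-regularity of the Nemytski operator $N(y)(x,t) := f(x,t,y(x,t))$ from $Y$ into $L^r(Q)$. The key tool is the continuous embedding $Y \hookrightarrow L^\infty(Q)$ built into the definition of $Y$: bounded subsets of $Y$ are uniformly bounded in $L^\infty(Q)$, and on such subsets Assumption~\ref{A1}(ii) --- the pointwise bounds on $f_y, f_{yy}$ and the local Lipschitz estimate on $f_{yy}$ --- yields a second-order Taylor expansion
\[
N(y+h) = N(y) + f_y(\cdot,y)\, h + \tfrac12 f_{yy}(\cdot,y)\, h^2 + \omega(y,h)
\]
with $\|\omega(y,h)\|_{L^\infty(Q)} = o(\|h\|_{L^\infty(Q)}^2) = o(\|h\|_Y^2)$, together with continuity of the multiplication operators $y \mapsto f_y(\cdot,y)$ and $y \mapsto f_{yy}(\cdot,y)$ into $L^\infty(Q)$. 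Since $L^\infty(Q) \hookrightarrow L^r(Q)$ on the bounded domain $Q$, this delivers $N \in C^2(Y; L^r(Q))$.

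The partial derivative $\partial_y \Phi(u, y_u)$ sends $z \in Y$ to $\bigl((\partial_t + \mathcal A) z + f_y(\cdot, y_u) z,\; z(\cdot,0)\bigr)$. Since $y_u \in L^\infty(Q)$, the zeroth-order coefficient $f_y(\cdot, y_u)$ is in $L^\infty(Q)$ and nonnegative (Assumption~\ref{A1}(ii)), so standard maximal parabolic regularity for linear equations with right-hand side in $L^r(Q)$ (with $r > \max\{2, 1+n/2\}$) and initial datum in $L^2(\Omega)$ gives a unique solution $z \in Y$ depending continuously on the data. Hence $\partial_y \Phi(u, y_u)$ is a bounded linear isomorphism, the IFT applies, and differentiating $\Phi(u, G_r(u)) = 0$ once along $v$ gives \reff{befstddt} for $z_{u,v} := G_r'(u) v$, while a second differentiation along $w$ produces \reff{befstdddt} for $\omega_{u,(v,w)} := G_r''(u)(v,w)$; the inhomogeneity $-f_{yy}(\cdot,y_u) z_{u,v} z_{u,w}$ comes out of $N''(y_u)(z_{u,v}, z_{u,w})$ via the chain rule.

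The main obstacle is the second step: bridging the two-norm discrepancy between pointwise $C^2$-regularity of $f$ in $y$ (which by itself never yields Fréchet differentiability of $N$ between $L^p$-spaces) and genuine Fréchet $C^2$-regularity of $N$ between function spaces. This is precisely why the state space $Y$ must incorporate $L^\infty(Q)$; without that embedding the Taylor remainder cannot be controlled and the IFT argument does not close. Everything else --- invertibility of the linearization and the derivation of the PDEs for $z_{u,v}$ and $\omega_{u,(v,w)}$ --- is routine linear parabolic theory and implicit differentiation.
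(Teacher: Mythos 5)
The paper does not prove this proposition: it is listed among ``known facts'' and is taken from the literature (Casas--Kunisch, Casas--Mateos), where the proof is exactly the implicit-function-theorem argument you describe --- Nemytskii $C^2$-regularity on $L^\infty(Q)$, invertibility of the linearization by maximal parabolic regularity, and implicit differentiation of $\Phi(u,G_r(u))=0$. So your route is the standard one and the outline is sound. One technical point needs fixing before the argument closes: with $Y:=W(0,T)\cap L^\infty(Q)$ the map $\Phi$ does \emph{not} take values in $L^r(Q)\times L^2(\Omega)$, because for a generic $y\in W(0,T)$ the expression $(\partial_t+\mathcal A)y$ lives only in $L^2(0,T;H^{-1}(\Omega))$. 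You must instead work on the graph-norm space $\{y\in W(0,T)\cap L^\infty(Q):(\partial_t+\mathcal A)y\in L^r(Q)\}$ --- precisely the space $D(\mathcal L)$ (modulo the initial condition) that the paper introduces in \eqref{befopmapdom} --- endowed with $\|y\|_{W(0,T)}+\|y\|_{L^\infty(Q)}+\|(\partial_t+\mathcal A)y\|_{L^r(Q)}$; on that space your Nemytskii and isomorphism arguments go through verbatim, since the equations for $z_{u,v}$ and $\omega_{u,(v,w)}$ have zero initial data and right-hand sides in $L^\infty(Q)\subset L^r(Q)$. Your emphasis on the two-norm issue (pointwise $C^2$ of $f$ never gives Fr\'echet differentiability between $L^p$ spaces; the $L^\infty$ embedding is what saves the Taylor remainder) is exactly the right diagnosis and is the reason the hypothesis $r>\max\{2,1+n/2\}$ appears.
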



\begin{Proposition} \label{befT3.1}
	The functional $J:L^r(Q) \longrightarrow \mathbb{R}$ is of class $C^2$. Moreover, 
	given $u, v, v_1, v_2 \in L^r(Q)$ we have
	\begin{align*}
		J'(u)v& =\int_Q\Big(\frac{dL_0}{dy}(x,t,y_u) z_{u,v}+gv \Big)\dx\dt
		= \int_Q(p_u+g)v\dx\dt,\\
		J''(u)(v_1,v_2)& = \int_Q\Big(\frac{\partial^2L_0}{\partial y^2}(x,t,y_u,u) - p_u
\frac{\partial^2f}{\partial y^2}(x,t,y_u)\Big)
		z_{u,v_1}z_{u,v_2}\dx\dt\label{befE3.5.a}.
	\end{align*}
Here, $p_u \in W(0,T) \cap C(\bar Q)$ is the unique solution of the adjoint equation
\begin{equation*}
		\left\{\begin{array}{l}\displaystyle \Big(-\frac{\dd }{\dt}+\mathcal{A}^*\Big)p + \frac{\partial f}
        {\partial y}(x,t,y_u)p=  
			\frac{\partial L}{\partial y}(x,t,y_u,u) \text{ in } Q,\\ p = 0\text{ on } \Sigma, \ 
          p(\cdot,T)=0\text{ on } \Omega.\end{array}\right.
\end{equation*}
\end{Proposition}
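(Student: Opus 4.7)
The plan is to decompose $J$ into a linear piece and a composite piece, apply the chain rule using Proposition~\ref{befderivative}, and then rewrite the resulting expressions with the help of the adjoint state. Specifically, I would first write $J(u) = J_1(u) + J_2(u)$, where $J_1(u) := \int_Q L_0(x,t,y_u(x,t))\,\dx\dt$ and $J_2(u) := \int_Q g(x,t)\,u(x,t)\,\dx\dt$. Since $g \in L^\infty(Q)$, $J_2$ is a continuous linear functional on $L^r(Q)$ and hence belongs to $C^\infty(L^r(Q))$, with $J_2'(u)v = \int_Q g v\,\dx\dt$ and $J_2''\equiv 0$.

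For $J_1$, I would factor $J_1 = \Lambda \circ G_r$, with $\Lambda: L^\infty(Q) \to \R$ defined by $\Lambda(y) := \int_Q L_0(x,t,y(x,t))\,\dx\dt$. The uniform bounds on $\partial L_0/\partial y$ and the Lipschitz property of $\partial^2 L_0/\partial y^2$ from Assumption~\ref{A1}(ii) imply, by standard Nemytskii-operator arguments, that $\Lambda$ is of class $C^2$ on $L^\infty(Q)$ with
\[
    \Lambda'(y)\zeta = \int_Q \frac{\partial L_0}{\partial y}(x,t,y)\zeta\,\dx\dt, \qquad
    \Lambda''(y)(\zeta_1,\zeta_2) = \int_Q \frac{\partial^2 L_0}{\partial y^2}(x,t,y)\zeta_1\zeta_2\,\dx\dt.
\]
Combined with Proposition~\ref{befderivative} this yields, via the chain rule, that $J \in C^2(L^r(Q))$, together with the ``raw'' expressions $J'(u)v = \int_Q (\partial L_0/\partial y)(x,t,y_u)\,z_{u,v}\,\dx\dt + \int_Q g v\,\dx\dt$, and $J''(u)(v_1,v_2)$ equal to $\int_Q (\partial^2 L_0/\partial y^2)(x,t,y_u)\,z_{u,v_1}z_{u,v_2}\,\dx\dt$ plus the ``extra'' contribution $\int_Q (\partial L_0/\partial y)(x,t,y_u)\,\omega_{u,(v_1,v_2)}\,\dx\dt$ arising from $\Lambda' \circ G_r''$.

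To collapse these raw expressions into the stated adjoint form, I would test the linearised state equation~\reff{befstddt} against $p_u$ and integrate over $Q$; integration by parts in $t$ (moving the time derivative onto $p_u$) and in $x$ (using $\langle \A z, p\rangle = \langle z,\A^* p\rangle$) eliminates all boundary contributions thanks to the homogeneous Dirichlet condition on $\Sigma$, $z(\cdot,0)=0$, and $p_u(\cdot,T)=0$. Substituting the adjoint equation satisfied by $p_u$ then converts $\int_Q (\partial L_0/\partial y)(x,t,y_u)\,z_{u,v}\,\dx\dt$ into $\int_Q p_u v\,\dx\dt$, giving the desired identity $J'(u)v = \int_Q (p_u + g)v\,\dx\dt$. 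The same testing procedure applied to equation~\reff{befstdddt} for $\omega_{u,(v_1,v_2)}$ turns the ``extra'' second-order term into $-\int_Q p_u\,(\partial^2 f/\partial y^2)(x,t,y_u)\,z_{u,v_1}z_{u,v_2}\,\dx\dt$, producing exactly the stated expression for $J''$.

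The main technical point is the justification of the chain rule at the level $L^r(Q)\to L^\infty(Q)\to\R$: one needs $z_{u,v}$ and $\omega_{u,(v,w)}$ to lie in $L^\infty(Q)$ so that $\Lambda$ can be differentiated at $y_u$ in those directions. This follows from the choice $r > \max\{2,\,1+n/2\}$, which ensures $L^\infty$-valued solutions of the linear parabolic equations \reff{befstddt}--\reff{befstdddt} by the same parabolic regularity theory that underlies Proposition~\ref{befestsemeq}; the integration by parts itself is then a routine computation in $W(0,T)$.
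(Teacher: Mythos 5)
The paper gives no proof of this proposition: it is a survey, and Proposition~\ref{befT3.1} is stated as a known fact (the standard reference being the Casas--Mateos/Tr\"oltzsch line of results and \cite{DJV2022b}). Your argument is exactly the standard derivation used there --- splitting off the linear term in $u$, writing the remaining part as a Nemytskii functional composed with $G_r$, applying the chain rule via Proposition~\ref{befderivative}, and then testing \reff{befstddt} and \reff{befstdddt} against the adjoint state to eliminate $z_{u,v}$ and $\omega_{u,(v_1,v_2)}$ --- and the computations (including the cancellation of the boundary terms from $z(\cdot,0)=0$ and $p_u(\cdot,T)=0$, and the identity $\partial L/\partial y=\partial L_0/\partial y$) all check out. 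The only point you leave implicit is the existence, uniqueness and $W(0,T)\cap C(\bar Q)$ regularity of $p_u$ itself, which the proposition asserts; this is standard linear parabolic theory given $r>\max\{2,1+n/2\}$ but deserves a sentence.
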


We introduce the Hamiltonian 
$Q\times\R\times\R\times \R \ni (x,t,y,p,u) \mapsto H(x,t,y,p,u) \in \R$, associated 
with the problem in the usual way:
\begin{align*}
	H(x,t,y,p,u):=L(x,t,y,u)+p(u-f(x,t,y)).
\end{align*}

\begin{Proposition} \label{befpontryagin}
If $\bar u$ is a weak local minimizer for problem \reff{EOF}--\eqref{befconstpara}, 
then there exist unique elements $\bar y, \bar p\in W(0,T)\cap L^\infty(Q)$ such that 
\begin{align}
   & \left\{\begin{array}{l} 	\big(\frac{\partial }{\partial t}+\mathcal A\big)\bar y + f(x,t,\bar y) = \bar u 
       \;\;{\rm in } \;\;Q,\\ \bar y =  0 \;\;{\rm on }\;\; \Sigma, \ 
           \bar y(\cdot,0)=y_0\;\;{\rm on }\;\; \Omega.\end{array}\right.
		\label{befE3.7}\\
		&\left\{\begin{array}{l}\displaystyle \Big(-\frac{\dd }{\dt}+\mathcal{A}^*\Big)p  =
      \frac {\partial H}{\partial y}(x,t,\bar y,\bar p,\bar u) \;\;{\rm in } \;\;Q,\\ \bar p = 
     0\text{ on } \Sigma, \ \bar p(\cdot,T)=0\text{ on } \Omega.
         \end{array}\right.
		\label{befE3.8}\\
		&\int_Q\frac {\partial H}{\partial u}(x,t,\bar y,\bar p,\bar u)(u - \bar u)\dx\dt \ge 0   
       \quad \forall u \in \mathcal U. \label{befE3.9}
\end{align}
\end{Proposition}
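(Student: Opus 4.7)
{ (sketch / proposal)}
The plan is to derive the three conditions in order, relying on the results stated in Propositions \ref{befestsemeq}--\ref{befT3.1}.

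First, I would define $\bar y := G_r(\bar u)$. Since $\bar u \in \mathcal U \subset L^\infty(Q) \hookrightarrow L^r(Q)$, Proposition \ref{befestsemeq} guarantees the existence of a unique $\bar y \in W(0,T) \cap L^\infty(Q)$ solving \reff{befE3.7}. Next, with $\bar y$ fixed, I would set $\bar p := p_{\bar u}$, the unique solution of the adjoint equation given in Proposition \ref{befT3.1}. This is a linear backward parabolic equation with coefficients built from $\partial f/\partial y(\cdot,\cdot,\bar y)$ and $\partial L/\partial y(\cdot,\cdot,\bar y,\bar u)$; since $\bar y, \bar u \in L^\infty(Q)$ and the derivatives of $f$ and $L$ are bounded on bounded sets by Assumption \ref{A1}(ii), standard linear parabolic theory yields $\bar p \in W(0,T) \cap C(\bar Q)$ and uniqueness, establishing \reff{befE3.8}. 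Observing that $H(x,t,y,p,u) = L(x,t,y,u) + p(u - f(x,t,y))$ gives $\partial H/\partial y = \partial L/\partial y - p\,\partial f/\partial y$, which matches the right-hand side of the adjoint equation in Proposition \ref{befT3.1} (up to the usual sign convention for backward equations).

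For the variational inequality \reff{befE3.9}, the key ingredient is that $\mathcal U$ is convex. Let $u \in \mathcal U$ be arbitrary; then for every $t \in (0,1]$ the convex combination $\bar u + t(u-\bar u) = (1-t)\bar u + t u$ belongs to $\mathcal U$ and converges to $\bar u$ in $L^\infty(Q)$ (and hence in $L^r(Q)$) as $t \to 0^+$. By the definition of a weak local minimizer and Proposition \ref{befT3.1} (which gives $J \in C^1(L^r(Q))$),
\[
0 \le \lim_{t \to 0^+} \frac{J(\bar u + t(u-\bar u)) - J(\bar u)}{t} = J'(\bar u)(u-\bar u) = \int_Q (\bar p + g)(u-\bar u)\dx\dt.
\]
Since $\partial H/\partial u(x,t,y,p,u) = g(x,t) + p$ in our problem, this is exactly \reff{befE3.9}.

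The only nontrivial point is the well-posedness of the adjoint equation in the claimed space $W(0,T)\cap L^\infty(Q)$. The $W(0,T)$ part is routine linear parabolic theory once one notes that $\partial L/\partial y(\cdot,\cdot,\bar y,\bar u) \in L^\infty(Q) \subset L^2(Q)$ and $\partial f/\partial y(\cdot,\cdot,\bar y) \in L^\infty(Q)$. The $L^\infty$ (indeed $C(\bar Q)$) regularity follows from the choice $r > \max\{2, 1+n/2\}$ together with the Stampacchia-type bootstrap for parabolic equations with bounded right-hand side, as used in the statement of Proposition \ref{befT3.1}. Uniqueness of $(\bar y, \bar p)$ is then immediate from the uniqueness statements for the state and (linear) adjoint equations.
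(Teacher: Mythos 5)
Your proposal is correct, and it is the canonical argument: state $\bar y=G_r(\bar u)$ from Proposition \ref{befestsemeq}, adjoint state $\bar p=p_{\bar u}$ from the linear backward equation in Proposition \ref{befT3.1}, and the variational inequality \reff{befE3.9} obtained by differentiating $t\mapsto J(\bar u+t(u-\bar u))$ at $t=0^+$ using convexity of $\mathcal U$ together with the identities $\frac{\partial H}{\partial y}=\frac{\partial L}{\partial y}-p\frac{\partial f}{\partial y}$ and $\frac{\partial H}{\partial u}=g+p$. Note that the paper itself offers no proof of this proposition -- it is listed among the ``known facts'' imported from the cited literature (essentially \cite{Troltzsch2010} and \cite{DJV2022b}) -- so there is no in-paper argument to diverge from; your route matches the standard one used in those sources. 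The only cosmetic remark: the sign match between \reff{befE3.8} and the adjoint equation of Proposition \ref{befT3.1} is exact (rearranging $\mathcal L^*p+\frac{\partial f}{\partial y}p=\frac{\partial L}{\partial y}$ gives precisely $\mathcal L^*p=\frac{\partial H}{\partial y}$), so the hedge ``up to the usual sign convention'' can be dropped.
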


Having the optimality system \reff{befE3.7}--\reff{befE3.9}, we define in the next
paragraphs the optimality mapping corresponding to problem 
\eqref{EOF}--\eqref{befconstpara}. 
Given the initial data $y_0$ in \eqref{befsee1}, we introduce the set
\begin{equation}\label{befopmapdom}
  D(\mathcal L):=\Big\{ y\in W(0,T)\cap L^{\infty}(Q)\Big \vert \ \Big(\frac{\dd}{\dt}+\mathcal 
         A\Big)y \in L^{r}(Q), y(\cdot,0)=y_0 \Big\}.
\end{equation}
We define the operator 
$\mathcal L:=\frac{\dd}{\dt}+\mathcal A :D(\mathcal L)\to L^{r}(Q)$.
The expression $\Big(\frac{\dd}{\dt}+\mathcal A\Big)y \in L^{r}(Q)$ means that there exists 
a function $\phi\in L^r(Q)$ such that
\bd
	\int_Q \Big(\frac{\dd}{\dt}y+\mathcal A y\Big)\psi\text{ d}x\text{ d}t=\int_Q \phi 
      \psi\text{ d}x\text{ d}t, \text{ for all }\psi\in L^2(0,T, H^1_0(\Omega)).
\ed
Of course, this is satisfied with $\phi = u - f$ if $y$ solves the PDE \reff{befsee1}.
To address the adjoint equation, we define the mapping
$\mathcal L^*:D(\mathcal L^*)\to L^{r}(Q)$, $\mathcal L^*:=(-\frac{\dd}{\dt}+\mathcal A^*)$, 
where
\bd
      D(\mathcal L^*):=\Big\{ p\in W(0,T)\cap L^{\infty}(Q)\Big \vert  \Big(-\frac{\dd}{\dt}+
     \mathcal A^*\Big)p \in L^{r}(Q), p(\cdot,T)=0 \Big\}.
\ed
Using the mappings $\mathcal L$ and $\mathcal L^*$, the state equation 
\eqref{befsee1} and the adjoint equation \eqref{befE3.8} can be 
written in a short form: 
\bd
     \mathcal L y=u-f(\cdot,y), \qquad 
  \mathcal L^* p=\frac {\partial H}{\partial y}(\cdot,y_u,p,u).
\ed
We remind the definition of normal cone to the set $\mathcal{U}$ at $u \in L^1(Q)$:
\begin{equation*} 
	N_{\mathcal{U}}(u):= \left\{ \begin{array}{cl}
		\big\{\nu \in L^{\infty}(Q)\big\vert  \ \int_Q \nu (v-u)\,\text{d}x\text{d}t\le 0 \ \ \forall v 
          \in \mathcal U \big\} & \text{ if } u \in \mathcal{U}, \\
		\emptyset  & \mbox{ if } u \not\in \mathcal{U}.
	\end{array}  \right. 
\end{equation*}

Let $C$ be a fixed positive constant. Denote by $\B_C^r$ the ball of radius $C$ in the 
space $L^r(Q)$, centered at the origin. Define the metric spaces
\begin{align} \label{befEYZ}
    \X:=D(\mathcal L)\times D(\mathcal L^*)\times\U  \quad \text{and} \quad 
         \Y:= \B_C^r \times \B_C^r \times L^\infty(Q),
\end{align}
where the elements of the space $\X$ are triples $\psi :=(y,p,u)$, 
while the elements of $\Y$ are denoted by $\zeta := (\xi,\eta,\rho)$. 
The metrics in these spaces are defined as follows:
\begin{align} \label{befEnzeta}
	d_{\X}(\psi_1,\psi_2)&:=\|   y_1-y_2\|   _{L^2(Q)}+\|   p_1-p_2\|   _{L^2(Q)}+\|   
        u_1-u_2\|   _{L^1(Q)},\\
	\nonumber d_{\Y}(\zeta_1,\zeta_2)&:=\|   \xi_1-\xi_2\|   _{L^{2}(Q)}+\|  
     \eta_1-\eta_2\|   _{L^{2}(Q)}+ 	\|   \rho_1-\rho_2\|   _{L^\infty(Q)}.
\end{align}
Notice that the norms used in the space $\Y$ are weaker than the norm in which the 
ball $\B_C^r$ is taken, because $r > 2$ for $n\in \{2,3\}$.

The first order necessary optimality condition for problem 
\eqref{EOF}--\eqref{befconstpara} in Theorem \ref{befpontryagin} 
can be rewritten as the inclusion 
\be \label{EF}
   0 \in \F(y,p,u) \;\; \mbox{ with } \; 
       \F(y,p,u) :=\left( \begin{array}{c}
		\mathcal Ly + f(\cdot,y)-u \\
		\mathcal L^*p- \frac {\partial H}{\partial y}(\cdot,y,p,u) \\
		\frac {\partial H}{\partial u}(\cdot,y,p,u)+ N_{\mathcal U}(u)
	\end{array} \right).
\ee

To study SMs-R property of the optimality mapping we consider elements 
$(\xi,\eta,\rho) \in \Y$ as disturbances in the optimality system: 
$(\xi,\eta,\rho) \in \F(y,p,u)$.

Let $(\bar y, \bar p, \bar u) \in \X$ be a reference solution of the inclusion \reff{EF}.
The next assumption is crucial for obtaining the SMs-R property of the optimality 
mapping $\F$. It was proved in \cite{DJV2022b} that it is also a sufficient 
condition for weak local optimality in $L^1$ of the reference control $\bar u$. 

\begin{Assumption}[\cite{DJV2022b}]\label{befasuff2para}
There exist positive constants $c$, $\alpha$ and $\gamma\in (4/6,1]$ for $n\in \{1,2\}$ and 
$\gamma\in (5/6,1]$ for $n=3$ such that
\be \label{growthbangpara}
		J'(\bar u)(u-\bar u)+J''(\bar u)(u-\bar u)^2\geq c\|u-\bar u\|_{L^1(Q)}^{1+1/\gamma}
\ee
for all $u \in \mathcal U$ with $\|u-\bar u\|_{L^1(Q)}<\alpha$.
\end{Assumption}

Our main result is the following theorem in which the statement for the case $\gamma=1$ 
was proven in \cite{DJV2022b}. The statement for $\gamma\in (4/6,1]$ for $n\in \{1,2\}$ 
and $\gamma\in (5/6,1]$ for $n=3$ follows by straight forward adaptations.

\begin{Theorem}[\cite{DJV2022b}]\label{dududu}
Let Assumption \ref{A1} be satisfied and 
let Assumption \ref{befasuff2para} be fulfilled for the reference solution 
$\bar \psi = (y_{\bar u},p_{\bar u}, \bar u)$ of $0 \in \F(\psi)$.
Then the mapping $\F$ is strongly metrically H\"older sub-regular at 
$(\bar \psi,0)$. 
More precisely, for every $\varepsilon \in (0, 1/2]$ there exist positive 
constants $\alpha_n $ and $\kappa_n$, $n=1,2,3$, 
(with $\alpha_1$ and $\kappa_1$ independent of $\varepsilon$) such that 
for all $\psi = (y,p,u) \in \X$ with $\| u-\bar u\|_{L^1(Q)} \leq \alpha_n$ and 
$\zeta = (\xi,\eta,\rho)\in \Y$ satisfying $\zeta\in\F(\psi)$, 	
the following inequalities are satisfied.
\bda
	&\!\!\!\!\!\!\!\!\!\!\!\! \| u-\bar u \|_{L^1(Q)}\le \kappa_n \Big(\|\rho \|_{L^{\infty}(Q)}+
     \|\xi\|_{L^2(Q)}+\|\eta\| _{L^2(Q)}\Big)^{\gamma\theta_0},\\
			& \!\!\!\!\!\!\!\!\!\!\!\!   \|y^{\zeta}_u-y_{\bar u}\|_{L^2(Q)}+
             \|p^{\zeta}_u-p_{\bar u}\|_{L^2(Q)}
			\le \kappa_n \Big(\|\rho \|_{L^{\infty}(Q)}+\|\xi\|_{L^2(Q)}+
          \|\eta\| _{L^2(Q)}\Big)^{\gamma\theta},
\eda		
where 
\bda
			&  \theta_0 = \theta = 1 & \mbox{ if } \;  n = 1,  \qquad\\
			& \theta_0 = \theta = 1 - \varepsilon &  \mbox{ if } \; n= 2, \qquad \\
			&  \theta_0 = \frac{10}{11}  - \varepsilon, \;\;  \theta = \frac{9}{11}  - \varepsilon    
                  &  \mbox{ if } \; n= 3. \qquad
\eda
\end{Theorem}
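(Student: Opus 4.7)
The strategy combines three ingredients: (i) a perturbed variational inequality obtained from the third row of the inclusion $\zeta\in\F(\psi)$, (ii) parabolic regularity and interpolation estimates that convert the $L^2$ size of the disturbances $\xi,\eta$ into bounds on the associated state/adjoint differences in stronger norms, with dimension-dependent loss, and (iii) the growth condition of Assumption \ref{befasuff2para}. The overall architecture parallels the one used in Section \ref{Saffin} for affine ODE problems, but the delicate point is the interpolation with $L^r$ data.

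First I would combine the perturbed and unperturbed variational inequalities. Since $\partial H/\partial u = g+p$, the third component of $\F$ at $\psi$ (with $\rho$ on the right) and Proposition \ref{befpontryagin} at $\bar\psi$ give, for every $v\in\U$,
\[
\int_Q (g+p-\rho)(v-u)\dx\dt\geq 0, \qquad \int_Q(g+\bar p)(v-\bar u)\dx\dt\geq 0.
\]
Taking $v=\bar u$ in the first and $v=u$ in the second and adding yields the scalar inequality $\int_Q [(\bar p-p)+\rho](u-\bar u)\dx\dt \geq 0$. Denoting by $y_u,p_u$ the undisturbed state and adjoint associated with the control $u$, I split $\bar p-p=(\bar p-p_u)+(p_u-p)$, and apply the second-order Taylor expansion of $J'$ around $\bar u$. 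By Proposition \ref{befT3.1} this gives $\int_Q(p_u-\bar p)(u-\bar u)\dx\dt = J''(\bar u)(u-\bar u)^2+R$, where $R$ is the Taylor remainder; therefore
\[
J'(\bar u)(u-\bar u)+J''(\bar u)(u-\bar u)^2 \leq \bigl(\|\rho\|_{L^\infty(Q)}+\|p-p_u\|_{L^\infty(Q)}\bigr)\|u-\bar u\|_{L^1(Q)}-R.
\]

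The delicate step is quantifying $\|p-p_u\|_{L^\infty(Q)}$, $\|y-y_u\|_{L^\infty(Q)}$ and $|R|$. The differences $y-y_u$ and $p-p_u$ satisfy linear parabolic equations driven by $\xi$ and $\eta$ (plus small Lipschitz terms in the differences themselves, which are absorbed). Since $\xi,\eta\in\B_C^r$ with $r>\max\{2,1+n/2\}$, parabolic maximal regularity yields an $L^\infty(Q)$ a priori bound of order $C$; interpolation with the small $L^2$ norm then produces
\[
\|y-y_u\|_{L^\infty(Q)}+\|p-p_u\|_{L^\infty(Q)} \leq C\bigl(\|\xi\|_{L^2(Q)}+\|\eta\|_{L^2(Q)}\bigr)^{\theta_0},
\]
with $\theta_0$ as in the statement; the loss $\theta_0<1$ for $n\geq 2$ is exactly what the Sobolev threshold $r>1+n/2$ forces. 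For the remainder I would use the Lipschitz continuity of $f_{yy},L_{0,yy}$ in Assumption \ref{A1} and an analogous interpolation estimate of $\|y_{u_s}-y_{\bar u}\|_{L^\infty(Q)}$ by a fractional power of $\|u-\bar u\|_{L^1(Q)}$ (for intermediate controls $u_s=\bar u+s(u-\bar u)$), yielding $|R|\leq C\|u-\bar u\|_{L^1(Q)}^{1+\delta}$ with $\delta>0$ depending on $n$.

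Plugging these bounds into the scalar inequality and invoking Assumption \ref{befasuff2para} on the left-hand side produces
\[
c\|u-\bar u\|_{L^1(Q)}^{1+1/\gamma} \leq C\bigl(\|\rho\|_{L^\infty(Q)}+\|\xi\|_{L^2(Q)}^{\theta_0}+\|\eta\|_{L^2(Q)}^{\theta_0}\bigr)\|u-\bar u\|_{L^1(Q)}+C\|u-\bar u\|_{L^1(Q)}^{1+\delta}.
\]
The lower bounds $\gamma>4/6$ for $n\leq 2$ and $\gamma>5/6$ for $n=3$ are exactly what guarantees $\delta>1/\gamma$ after all interpolations are tracked, so the remainder term can be absorbed into the left-hand side once $\|u-\bar u\|_{L^1}\leq\alpha_n$ is small enough; this determines $\alpha_n$. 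Dividing by $\|u-\bar u\|_{L^1(Q)}$ and raising to the power $\gamma$ yields the first claimed bound with exponent $\gamma\theta_0$. The $L^2$ estimates for $y^\zeta_u-y_{\bar u}$ and $p^\zeta_u-p_{\bar u}$ follow by writing $y-\bar y=(y-y_u)+(y_u-\bar y)$ and similarly for $p$, and combining the bound above with linear stability of the state and adjoint equations; the slightly smaller exponent $\theta<\theta_0$ for $n=3$ reflects one additional interpolation step needed for the adjoint. The main obstacle is the consistent bookkeeping of interpolation exponents in dimension three: balancing the Sobolev-embedding losses against the coercivity exponent $1+1/\gamma$ is precisely what pins down the admissible range of $\gamma$ and the values of $\theta_0,\theta$.
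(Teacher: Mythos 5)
Your overall architecture --- combining the perturbed and unperturbed variational inequalities into the scalar inequality $\int_Q[(\bar p-p)+\rho](u-\bar u)\,\mathrm{d}x\,\mathrm{d}t\ge 0$, expanding $J'$ to second order around $\bar u$, invoking Assumption \ref{befasuff2para}, and absorbing the Taylor remainder for $\|u-\bar u\|_{L^1}$ small --- is indeed the skeleton of the argument in \cite{DJV2022b} (the survey does not reproduce the proof; it cites that reference and identifies its key ingredient). The genuine gap is in your step (ii), on which the whole theorem hinges. You propose to bound $\|p-p_u\|_{L^\infty(Q)}$ by interpolating the a priori $L^r$ bound on the data against the small $L^2$ norm of $(\xi,\eta)$. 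This cannot produce the stated exponents. To get $\|p-p_u\|_{L^\infty(Q)}\le C\|h\|_{L^q(Q)}$ one needs $q>1+n/2$; interpolating $\|h\|_{L^q}\le\|h\|_{L^2}^{\lambda}\|h\|_{L^r}^{1-\lambda}$ with $q$ just above $1+n/2$ gives, for $n=3$, $\lambda=(2/5-1/r)\,/\,(1/2-1/r)$, which depends on $r$ (only assumed to satisfy $r>\max\{2,1+n/2\}$), tends to $0$ as $r\downarrow 5/2$, and is bounded by $4/5<10/11$ even as $r\to\infty$. So your route yields at best an $r$-dependent exponent strictly worse than $10/11-\varepsilon$, and the specific values $10/11$ and $9/11$ in the statement remain unexplained.

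The mechanism actually used --- what the survey calls the ``$L^s$--$L^1$ type estimates for the linearized PDEs'', key to \cite[Lemma 11]{DJV2022b} --- does \emph{not} estimate $p-p_u$ in $L^\infty$. Instead, $L^2$ parabolic maximal regularity together with the anisotropic Sobolev embedding gives $\|p-p_u\|_{L^{s}(Q)}\le C(\|\xi\|_{L^2}+\|\eta\|_{L^2})$ with $s=2(n+2)/(n-2)=10$ for $n=3$ (any finite $s$ for $n=2$, $s=\infty$ for $n=1$), and the duality pairing is closed by H\"older together with $\|u-\bar u\|_{L^{s'}}\le\|u-\bar u\|_{L^\infty}^{1/s}\|u-\bar u\|_{L^1}^{1/s'}\le C\|u-\bar u\|_{L^1}^{9/10}$. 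Feeding $c\|u-\bar u\|_{L^1}^{1+1/\gamma}\le C(\|\xi\|_{L^2}+\|\eta\|_{L^2})\|u-\bar u\|_{L^1}^{9/10}+\|\rho\|_{L^\infty}\|u-\bar u\|_{L^1}+|R|$ into the growth condition gives $\|u-\bar u\|_{L^1}\le C(\cdots)^{1/(1/10+1/\gamma)}$, which for $\gamma=1$ is exactly the exponent $10/11$; the dual estimate $\|y_u-y_{\bar u}\|_{L^2}\le C\|u-\bar u\|_{L^1}^{9/10}$ then produces $\theta=9/11$, and the same accounting explains $\theta_0=\theta=1$ for $n=1$ and $1-\varepsilon$ for $n=2$. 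Your treatment of the remainder $R$ and of the admissible range of $\gamma$ is plausible in spirit but inherits the same defect, since it also relies on an $L^\infty$ interpolation of $y_{u_s}-y_{\bar u}$; it too must be run through the $L^s$--$L^1$ estimates to obtain the claimed exponents.
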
 

Additional results in the spirit of Theorem \ref{dududu} can be found in \cite{DJV2022b}.

\bino

{\large\bf Additional remarks and references.}
Results in the spirit of Theorem \ref{dududu} were established in \cite{DJV-22-elliptic} for affine optimal control problems subject to elliptic PDEs with Robin-type boundary conditions. Similar results were shown under the designation 
of solution stability in \cite{CDJ2023} considering several distinct assumptions on the joint growth of the first 
and second variation of the objective functional which are, in the elliptic case, weaker than Assumption \ref{befasuff2para}.\newline
In \cite{CJV2024Ser} the Strong Metric Hölder Subregularity of the optimality mapping 
was investigated for an optimal control problem governed by a semilinear parabolic PDE as in the discussion above, but with the additional constraint that the controls have a fixed state-distribution. This allowed the authors, due to the stronger a-priori estimates for the involved PDEs, 
to improve the estimations of Theorem \ref{dududu} substantially.\newline
The approach introduced in \cite{DJV2022b} relies on $L^s-L^1$ type estimates for the linearized PDEs allowing the estimations that are key in the proof of \cite[Lemma 11]{DJV2022b}. Motivated by this result, the notion of Strong Metric Hölder Subregularity was further investigated for optimal control problems constrained by the $2$-dimensional Navier--Stokes equation in \cite{DJNS202} and for the Boussinesq system in \cite{JS2024}. Regarding Assumption \ref{befasuff2para}, it is important to notice, that it can be satisfied only by bang-bang optimal controls. Still in \cite{DJNS202}, it was shown that for stability under perturbations in the normal cone, and thus SMHsR, it is necessary to have a growth which is, depending on the sign of the second variation, either close to Assumption \ref{befasuff2para} or agrees with it. This highlights the generality of Assumption \ref{befasuff2para} in the bang-bang situation, which is also supported by the results in \cite{DW} which state that the occurrence of bang-bang optimal controls is generic in affine optimal control problems. Subsequently, in the case of a linear quadratic control problem that possesses a nonnegative second variation, it was shown in \cite{DL}, that for those problems, the notion of SMsR is not only equivalent to a growth of the objective functional as in Assumption \ref{befasuff2para} and discussed in \cite{DJNS202}, but also to a Polyak-Lojasiewicz-type inequality. The implication of Assumption \reff{befasuff2para} for the investigation of error estimates for the numerical solution was evidenced in \cite{J2023} where it was shown that for an optimal control problem subject to a semilinear elliptic PDE, Assumption \ref{befasuff2para} is sufficient for error estimates for 
the numerical approximation generated by a Finite Element Method (FEM) with piecewise 
constant controls, and that the notion of Strong Metric Hölder Subregularity can be applied for 
the investigation of error estimates for the numerical approximation generated by a FEM with 
a variational discretization.


\end{document}